\newtheorem{lem}{Lemma}[section]
\newtheorem{prop}{Proposition}[section]
\newtheorem{thm}{Theorem}[section]
\newtheorem{example}{Example}[section]
\theoremstyle{definition}
\theoremstyle{remark}
\theoremstyle{remark}
\newtheorem{remark}{Remark}[section]
\numberwithin{equation}{section}
\newcommand{\N}{{\mathbb N}}
\newcommand{\R}{{\mathbb R}}
\DeclareMathOperator{\re}{Re}
\begin{document}

\title{Maximizers for Gagliardo--Nirenberg inequalities\\ and related non-local problems}

\begin{abstract}
In this paper we study the existence of maximizers for two families of interpolation inequalities, namely a generalized Gagliardo--Nirenberg inequality and a new inequality involving the Riesz energy. Two basic tools in our argument are a generalization of Lieb's Translation Lemma and a Riesz energy version of the Br\'ezis--Lieb lemma. 
\end{abstract}

\author{Jacopo Bellazzini}
\address{Jacopo Bellazzini\\
Universit\`a di Sassari\\
Via Piandanna 4, 07100 Sassari, Italy}
\email{jbellazzini@uniss.it}

\author{Rupert L. Frank}
\address{Rupert L. Frank\\
Mathematics 253-37, Caltech\\
Pasadena, CA 91125, USA}
\email{rlfrank@caltech.edu}

\author{Nicola Visciglia}
\address{Nicola Visciglia\\
Dipartimento di Matematica Universit\`a di Pisa\\
Largo B. Pontecorvo 5, 56100 Pisa, Italy}
\email{viscigli@dm.unipi.it}

\maketitle

\section{Introduction}

An old and obvious, but useful observation is that boundedness of a linear operator $A$ from some Banach space $B$ to $L^q(X)$, $X$ a measure space, follows, provided one knows boundedness from $B$ to both $L^{q_1}(X)$ and $L^{q_2}(X)$ for some $1\leq q_1<q<q_2\leq\infty$. In this paper we address the question whether the norm of $A$, acting from $B$ to $L^q(X)$, is attained, that is, whether the supremum
\begin{equation}
\label{eq:toy}
\sup_{f\neq 0} \frac{\|Af\|_{L^q}}{\|f\|_B}
\end{equation}
is a maximum. We shall see that in many situations, the \emph{boundedness} from $B$ to both $L^{q_1}(X)$ and $L^{q_2}(X)$ provides the key input in proving that the norm from $B$ to $L^q(X)$ is \emph{attained}.

Instead of developing an abstract theory, we explain our argument with two examples, the first one being the following Gagliardo--Nirenberg inequality,
\begin{equation}\label{GagliNiren}
\|D^r \varphi\|_{L^q}\leq C(r,s,p,q, d) \, \|\varphi\|_{L^p}^{1-\theta} \, \|D^s \varphi\|_{L^2}^{\theta}
\qquad \forall \varphi\in \dot H^s(\R^d)\cap L^p(\R^d) \,,
\end{equation}
where
$$
D=\sqrt{-\Delta} \,.
$$
It is known that \eqref{GagliNiren} holds provided that
\begin{enumerate}
\item $-r+\frac{d}{q}=(1-\theta)\frac{d}{p}+\theta(-s+\frac{d}{2})$,
\item $\frac{r}{s}\leq \theta\leq 1$,
\item $0<r\leq s$, $1<p,q<\infty$.
\end{enumerate}
Of course, condition (1) expresses the scale invariance of \eqref{GagliNiren}. The proof of \eqref{GagliNiren} is essentially contained in \cite{GiVe} (see our Lemma \ref{fracGN} below) and stated explicitly, for instance, in \cite{NP}. 

Note that finding the smallest possible constant $C(r,s,p,q,d)$ in \eqref{GagliNiren} is a problem of the form \eqref{eq:toy} with $B=\dot H^{s}(\R^d)\cap L^p(\R^d)$, $X=\R^d$ and $Af = D^r f$. Indeed, finding the best constant in \eqref{GagliNiren} is, by scaling, equivalent to finding the best constant in the inequality with the right side replaced by the norm $\|f\|_B=
\left( \|f\|_{L^p}^2 + \|D^s f\|_{L^2}^2\right)^{1/2}$. Besides the best constants also the optimizers (if they exist) are related by scaling.

In Theorem \ref{main} we shall show that the supremum
$$
\sup_{\varphi\neq 0} \frac{\|D^r \varphi\|_{L^q}}{ \|\varphi\|_{L^p}^{1-\theta}\|D^s \varphi\|_{L^2}^{\theta}}
$$
is attained under the additional assumption $r/s<\theta<1$. (Compare with (2)!) This is precisely the condition that one can find $q_1<q<q_2$ for which the inequality is valid (with $r,s,p$ and $d$ held fixed). In Example \ref{ex} we shall show that, in general, the supremum may not be attained if $\theta=r/s$.

The result in this generality seems to be new. The important special case $r=0, s=1$, $p=2$ is a classical result of Weinstein \cite{W}. We also mention the work \cite{DD} where the optimal constant and the optimizers were explicitly found in the special case $r=0$, $s=1$, $q=\frac p2 +1$, and \cite{CE} for a mass transportation approach to the problem.

\bigskip

Our second motivation, and our second example illustrating the method described above, concerns a lower bound for the quantity
\begin{equation}
\label{eq:d}
\iint_{\R^d\times\R^d} \frac{|\varphi(x)|^2 |\varphi(y)|^2}{|x-y|^\lambda} \,dxdy \,.
\end{equation}
In the special case $\lambda=d-2$, $d\geq 3$, this expression represents the Coulomb energy of a density $|\varphi|^2$. While upper bounds on \eqref{eq:d} are readily available (for instance in terms of an $L^p(\R^d)$ norm of $\varphi$ by the Hardy--Littlewood--Sobolev inequality; see, e.g., \cite{LiLo}), lower bounds are a notoriously difficult problem in mathematical physics. Among the few available results are the Lieb--Oxford inequality \cite{LiOx}. In Proposition \ref{CHLSgen} we prove a lower bound for \eqref{eq:d}, a special case of which is, for $d=3$ and $\lambda=1$,
\begin{equation}
\label{eq:dbound}
\|\varphi\|_{L^{2p}}\leq C(p,s) \|\varphi\|_{\dot H^{s}}^{\frac{\theta}{2-\theta}}\left(\iint_{\R^3\times \R^3}
\frac{|\varphi(x)|^2 |\varphi(y)|^2}{|x-y|} \, dxdy\right)^{\frac{1-\theta}{4-2\theta}}
\end{equation}
with $\theta=\frac{6-5p}{3-2ps-2p}$. Here the parameters $s>0$ and $1<p\leq\infty$ satisfy
\begin{align*}
& p\in \left[\frac{3}{3-2s}, \frac{1+2s}{1 +s}\right] & & \text{if}\ 0<s<1/4 \,, \\
& p = \frac{3}{3-2s}= \frac{1+2s}{1 +s} & & \text{if}\ s=1/4 \,, \\
& p\in\left[ \frac{1+2s}{1 +s},  \frac 3{3-2s}\right] & & \text{if}\ 1/4< s<3/2 \,, \\
& p\in \left[\frac{1+2s}{1 +s}, \infty\right) & & \text{if}\ s= 3/2 \,, \\
& p\in \left[\frac{1+2s}{1+s}, \infty\right] & & \text{if}\ s> 3/2 \,.
\end{align*}
In Theorem \ref{smag1} we show, among other things, that the supremum
$$
\sup_{\varphi\not\equiv 0} \frac{\|\varphi\|_{L^{2p}}}{\|\varphi\|_{\dot H^{s}}^{\frac{\theta}{2-\theta}}\left(\iint_{\R^3\times \R^3}
\frac{|\varphi(x)|^2 |\varphi(y)|^2}{|x-y|}dxdy\right)^{\frac{1-\theta}{4-2\theta}}}
$$
is attained under the additional assumption $p\neq 3/(3-2s)$, $p\neq (1+2s)/(1+s)$ and $p\neq\infty$. This assumption has the same origin as explained before.

\bigskip

Our third goal is to advertize a compactness proof which has its origins in the works \cite{L,Lieb,BL,FLL}. One ingredient in this compactness proof is the \emph{pqr} Lemma of Fr\"ohlich, Lieb and Loss \cite{FLL} which lies at the heart of our interpolation approach. The two other ingredients are a `compactness up to translation' lemma \cite{Lieb} and the Br\'ezis--Lieb lemma about the remainder term in Fatou's lemma \cite{BL}. When carrying out the optimization in our two examples we have to prove both a new version of the `compactness up to translations' lemma (Lemma \ref{LiebIntro}), which is valid in $\dot H^s(\R^d)$ with arbitrary $s>0$, and a non-local version of the Br\'ezis--Lieb lemma (Theorem \ref{BL}), which is valid for the double integral \eqref{eq:d}. We hope that both results are of interest even beyond the concrete context of this paper.

Of course, we are aware that there are alternative approaches to establish the existence of optimizers in variational problems. Early approaches are based on symmetrization, but are therefore restricted to $\dot H^s$ with $0<s\leq 1$; see, e.g., \cite{T,St,L0,W,L} and references therein. Lions developed his method of concentration compactness in \cite{Li1,Li2} and G\'erard found a different argument based on refined Sobolev inequalities \cite{Ge,GMO}; see also \cite{FL} for a recent application of this technique and also \cite{PP}. In addition, progress was made concerning the existence of optimizers in restriction-type inequalities; see, e.g., \cite{BR}, \cite{CS}, \cite{D}, \cite{FVV}, \cite{FVV2}, \cite{Fo}, \cite{Q}.


\section{Main results}

Our first main result is

\begin{thm}[Existence of optimizers for the Gagliardo--Nirenberg inequality] \label{main}
Let $r,s>0$, $\frac{r}{s}<\theta<1$ and $p, q\in (1, \infty)$ such that
$$
-r+\frac{d}{q}=(1-\theta)\frac{d}{p}+\theta\left(-s+\frac{d}{2}\right) \,.
$$
Then
\begin{equation}
\label{eq:main}
\sup_{0\not\equiv\varphi\in \dot H^s\cap L^p} \frac{\|D^r \varphi\|_{L^q}}{ \|\varphi\|_{L^p}^{1-\theta}\|D^s \varphi\|_{L^2}^{\theta}}
\end{equation}
is attained.
\end{thm}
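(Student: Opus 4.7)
The plan is to follow the four-ingredient compactness strategy advertised in the introduction: normalize a maximizing sequence, apply the pqr Lemma to rule out vanishing, use the generalized translation lemma to extract a non-trivial weak limit, and conclude with a Br\'ezis--Lieb decomposition. I take a maximizing sequence $\{\varphi_n\}\subset\dot H^s\cap L^p$; using the two independent invariances $\varphi\mapsto\alpha\varphi$ and $\varphi\mapsto\varphi(\lambda\,\cdot)$ of the quotient in \eqref{eq:main}, I normalize so that $\|\varphi_n\|_{L^p}=\|D^s\varphi_n\|_{L^2}=1$, whence $\|D^r\varphi_n\|_{L^q}\to S$, where $S$ denotes the supremum.

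The strict inequalities $r/s<\theta<1$ allow me to choose $\theta_1,\theta_2$ with $r/s<\theta_1<\theta<\theta_2<1$ and corresponding Lebesgue exponents $q_1<q<q_2$ fixed by the scaling identity~(1). Applying \eqref{GagliNiren} at the parameters $(\theta_i,q_i)$ yields uniform $L^{q_1}$ and $L^{q_2}$ bounds on $D^r\varphi_n$, while $\|D^r\varphi_n\|_{L^q}\ge S/2$ for large $n$. The pqr Lemma of Fr\"ohlich--Lieb--Loss then furnishes $\eta,\delta>0$ such that $|\{x:|D^r\varphi_n(x)|>\eta\}|\ge\delta$ for all large $n$. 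Combined with the uniform boundedness of $\{\varphi_n\}$ in $\dot H^s\cap L^p$, the generalized translation lemma (Lemma \ref{LiebIntro}) produces translations $\tau_n\in\R^d$ such that, along a subsequence, $\tilde\varphi_n:=\varphi_n(\,\cdot\,-\tau_n)\rightharpoonup\bar\varphi$ weakly in $\dot H^s\cap L^p$ with $\bar\varphi\neq 0$.

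Setting $r_n:=\tilde\varphi_n-\bar\varphi\rightharpoonup 0$, the Br\'ezis--Lieb lemma applied in $L^p$, in $\dot H^s$, and in $L^q$ (to $D^r r_n$) yields
\begin{align*}
1 &= \|\bar\varphi\|_{L^p}^p+\|r_n\|_{L^p}^p+o(1),\\
1 &= \|D^s\bar\varphi\|_{L^2}^2+\|D^s r_n\|_{L^2}^2+o(1),\\
S^q &= \|D^r\bar\varphi\|_{L^q}^q+\|D^r r_n\|_{L^q}^q+o(1).
\end{align*}
Writing $a:=\|\bar\varphi\|_{L^p}^p$, $b:=\|D^s\bar\varphi\|_{L^2}^2$, letting $A,B$ denote subsequential limits of the analogous remainder quantities, and setting $\alpha:=q(1-\theta)/p$, $\beta:=q\theta/2$, the sharp Gagliardo--Nirenberg inequality applied to both $\bar\varphi$ and $r_n$ combines with the identities above to give
$$
1\le a^\alpha b^\beta+A^\alpha B^\beta,\qquad a+A=b+B=1.
$$
The scaling identity~(1) yields $\alpha+\beta=1+q(\theta s-r)/d$, so the hypothesis $\theta>r/s$ forces $\alpha+\beta>1$. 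Weighted AM--GM gives $a^\alpha b^\beta\le t^{\alpha+\beta}$ and $A^\alpha B^\beta\le(1-t)^{\alpha+\beta}$ with $t=(\alpha a+\beta b)/(\alpha+\beta)\in[0,1]$; since $t^{\alpha+\beta}+(1-t)^{\alpha+\beta}\le 1$ with equality only at $t\in\{0,1\}$, chasing equality back through AM--GM forces $(a,b)\in\{(0,0),(1,1)\}$. Because $\bar\varphi\neq 0$ we must have $(a,b)=(1,1)$, so $A=B=0$, and $\tilde\varphi_n\to\bar\varphi$ strongly in $L^p\cap\dot H^s$, making $\bar\varphi$ the desired maximizer. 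I expect the main obstacle to be the non-vanishing step: converting the pqr-type measure lower bound on $D^r\varphi_n$ into weak non-vanishing of $\varphi_n$ itself is precisely the purpose of the non-local translation lemma, which has to be engineered to work in $\dot H^s$ for every $s>0$, rather than just $0<s\le 1$ where classical symmetrization-based arguments apply.
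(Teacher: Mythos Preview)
Your strategy matches the paper's: normalize, exploit the non-endpoint hypothesis to bound $D^r\varphi_n$ in $L^{q_1}$ and $L^{q_2}$, invoke the $pqr$ lemma, translate via Lemma~\ref{LiebIntro}, and finish with Br\'ezis--Lieb. Two points need tightening.

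First, Lemma~\ref{LiebIntro} cannot be applied to $\varphi_n$ directly, because your measure lower bound is on $\{|D^r\varphi_n|>\eta\}$, not on $\{|\varphi_n|>\eta\}$; the lemma does not itself ``convert'' one into the other. The paper instead applies Lemma~\ref{LiebIntro} to the sequence $D^r\varphi_n$, which is bounded in $\dot H^{s-r}\cap L^q$ (here $s-r>0$ since $r/s<\theta<1$), obtaining translations $x_n$ and a nonzero weak limit $\psi$ of $D^r\varphi_n(\cdot+x_n)$ in $\dot H^{s-r}\cap L^q$. A separate short argument then shows that the weak limit $\bar\varphi$ of $\varphi_n(\cdot+x_n)$ in $\dot H^s\cap L^p$ satisfies $D^r\bar\varphi=\psi$, hence $\bar\varphi\not\equiv 0$.

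Second, the Br\'ezis--Lieb splittings in $L^p$ and in $L^q$ (for $D^r$) require a.e.\ convergence of $\tilde\varphi_n$ and of $D^r\tilde\varphi_n$; the paper extracts this from Rellich's theorem after the weak limit is identified, and you should insert that step (the $\dot H^s$ splitting needs only weak convergence in a Hilbert space, not Br\'ezis--Lieb). Your concluding algebra via weighted AM--GM and the strict convexity of $t\mapsto t^{\alpha+\beta}+(1-t)^{\alpha+\beta}$ is a correct variant of the paper's one-line use of $(1+x)^\alpha(1+y)^\beta\ge 1+x^\alpha y^\beta$ for $\alpha+\beta\ge 1$.
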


Notice that we don't assume on $s$ any extra restriction of the type $s<d/2$, and hence no Sobolev 
embedding in principle is available.
We underline that this is not a purely mathematical question, in fact
the corresponding Gagliardo--Nirenberg inequalities (with $s>d/2$) play a crucial role
in several models (see \cite{FIP}, \cite{PS}).

As explained in the introduction, the crucial assumption is that $\frac{r}{s}<\theta<1$. The following example shows that in the endpoint case $\theta=r/s$ we cannot, in general, expect the existence of maximizers for \eqref{GagliNiren}.

\begin{example}\label{ex}
Consider the Gagliardo--Nirenberg inequality (for simplicity we work in dimension $d=1$, but the example can be extended in higher dimensions)
\begin{equation}\label{GNnomax}
\|u'\|_{L^2}\leq \|u\|_{L^2}^{1/2}\|u''\|_{L^2}^{1/2} \,.
\end{equation}
By Plancharel's theorem this inequality is equivalent to the inequality
\begin{equation*}
\|\xi \hat u\|_{L^2}\leq \|\hat u\|_{L^2}^{1/2}\| \xi^2 \hat u\|_{L^2}^{1/2} \,,
\end{equation*}
which is an immediate consequence of the Cauchy--Schwarz inequality. Moreover, by choosing $$\hat u_n (\xi)=\chi_{(1-1/n, 1+1/n)}(\xi)$$ one can check that the constant $C=1$ on the right side cannot be improved. In fact, it is easy to show that there are not optimizers in the inequality above since the Cauchy--Schwarz inequality gives an identity if and only if the functions
involved are multiples of each other. Hence any maximizer $\hat u_0$ should satisfy
$$\xi^2 |\hat u_0(\xi)|=\mu |\hat u_0(\xi)|$$
for a suitable $\mu\in \R$, which is only possible for $\hat u_0=0$.
\end{example}

In the other endpoint case $\theta=1$ inequality \eqref{GagliNiren} turns into a Sobolev inequality and compactness may be lost both by translations and dilations. For $\theta=1$, $r=0$ and $s<d/2$ existence of optimizers was proved in \cite{L} (in a dual formulation).

\begin{remark}
If $r\in\N$, then a slight modification of our proof shows that the supremum is also attained if $\| D^r \varphi\|_{L^q}$ in \eqref{eq:main} is replaced by
\begin{equation}
\label{eq:normint}
\left( \sum_{\substack{\alpha=(\alpha_1,...,\alpha_d) \\
\sum_{i=1}^d \alpha_i=r}} \int_{\R^d} |\partial^{\alpha} \varphi|^q dx \right)^{1/q} \,.
\end{equation}
A similar remark applies to $\| D^s \varphi\|_{L^2}$. If both $r$ and $s$ are integers, \eqref{eq:main} with norms as in \eqref{eq:normint} is proved, for instance, in \cite{Ca} and \cite{Ma}.
\end{remark}

We now discuss our second main result, which gives a lower bound on \eqref{eq:d} in terms of an $L^p$ norm and a homogeneous Sobolev norm of $\varphi$. We define the Banach spaces ${\mathcal H^{s,\lambda}}(\R^d)$ as the completion of $C^\infty_0 (\R ^d)$ with respect to the norm
$$
\|\varphi\|_{\mathcal H^{s, \lambda}}:=\|\varphi\|_{\dot H^{s}} + \left( \iint_{\R^d\times \R^d}
\frac{|\varphi(x)|^2 |\varphi(y)|^2}{|x-y|^{\lambda}}dxdy\right )^{\frac 14} \,.
$$
Arguing as in \cite{R} one can prove that this quantity is a norm.

\begin{prop}\label{CHLSgen}
Let $d\geq 1$, $s>0$, $0<\lambda<d$ and
assume that $p$ satisfies
\begin{enumerate}
\item if $\lambda< 4s$ and $ d< 2s$ then
$$p\in \left[\frac{d-\lambda+4s}{d-\lambda +2s}, \infty\right] \,,$$
\item if $\lambda< 4s$ and $ d= 2s$ then
$$p\in \left[\frac{d-\lambda+4s}{d-\lambda +2s}, \infty\right) \,,$$
\item if $\lambda< 4s$ and $d>2s$ then
$$p\in\left[ \frac{d-\lambda+4s}{d-\lambda +2s},  \frac d{d-2s}\right] \,,$$
\item if $\lambda= 4s$ and $d>2s$ then
$$p= \frac{d-\lambda+4s}{d-\lambda +2s}=  \frac d{d-2s} \,,$$
\item if $\lambda>4s$ and $d>2s$
then
$$p\in \left[\frac{d}{d-2s}, \frac{d-\lambda+4s}{d-\lambda +2s}\right] \,.$$
\end{enumerate}
Then the  following bound holds,
\begin{equation}\label{converseHLSgen}
\|\varphi\|_{L^{2p}}\leq C(p,d,s,\lambda) \|\varphi\|_{\dot H^{s}}^{\frac{\theta}{2-\theta}}\left(\iint_{\R^d\times \R^d}
\frac{|\varphi(x)|^2 |\varphi(y)|^2}{|x-y|^{\lambda}}dxdy\right)^{\frac{1-\theta}{4-2\theta}} 
\qquad \forall \varphi \in \mathcal H^{s,\lambda}(\R^d)
\end{equation}
with $\theta=\frac{2d-2pd+p\lambda}{d-2ps-pd+p\lambda}$.
\end{prop}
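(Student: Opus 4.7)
The plan is to recast \eqref{converseHLSgen} via $u:=|\varphi|^2$ and the Fourier--Riesz identification
$$I_\lambda(u)=c_{d,\lambda}\|u\|_{\dot H^{-(d-\lambda)/2}}^2,$$
which turns the Riesz energy into (the square of) a homogeneous negative Sobolev seminorm. Since $\|\varphi\|_{L^{2p}}^2=\|u\|_{L^p}$, the desired inequality is equivalent (after raising to the power $2-\theta$) to
$$\|u\|_{L^p}\leq C\|\varphi\|_{\dot H^s}^{2\theta/(2-\theta)}\|u\|_{\dot H^{-(d-\lambda)/2}}^{2(1-\theta)/(2-\theta)},$$
and I would derive this from three ingredients.

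First, a fractional product rule giving $\|u\|_{\dot H^{2s-d/2}}\leq C\|\varphi\|_{\dot H^s}^2$. For $0<s\leq d/4$ (so that $2s-d/2\leq 0$) this follows from dual Sobolev $L^{d/(d-2s)}\hookrightarrow\dot H^{2s-d/2}$ applied to $u$ together with $\dot H^s\hookrightarrow L^{2d/(d-2s)}$ applied to $\varphi$; for $d/4<s<d/2$ one uses instead the Kato--Ponce fractional Leibniz inequality
$\||\varphi|^2\|_{\dot H^{\sigma}}\lesssim \|\varphi\|_{L^{p_1}}\|\varphi\|_{\dot W^{\sigma,p_2}}$
with the choice $p_1=2d/(d-2s)$, $p_2=d/s$ (so that $1/p_1+1/p_2=1/2$), and Sobolev embedding on each factor. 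The regimes where $s\geq d/2$ are handled by analogous $L^\infty$-type product estimates.

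Second, log-convexity of $\dot H^\rho$-seminorms in $\rho$ (Cauchy--Schwarz in Fourier space) yields the interpolation
$$\|u\|_{\dot H^\tau}\leq \|u\|_{\dot H^{2s-d/2}}^{\alpha}\|u\|_{\dot H^{-(d-\lambda)/2}}^{1-\alpha},\qquad \tau=\alpha\bigl(2s-\tfrac{d}{2}\bigr)+(1-\alpha)\bigl(-\tfrac{d-\lambda}{2}\bigr),$$
where the choice $\alpha=\theta/(2-\theta)$ is forced by matching the scaling identity that defines $\theta$ (one checks by direct computation that this gives exactly the value of $\tau$ compatible with $L^p$). Third, Sobolev embedding $\dot H^\tau\hookrightarrow L^p$ then delivers the bound on $\|u\|_{L^p}$. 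Substituting the product rule back into this chain and returning to $\varphi$ and $I_\lambda$ yields \eqref{converseHLSgen}.

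The hard part will be arranging these ingredients so that the relevant embeddings actually hold in each of the five parameter regimes (1)--(5); this is essentially the source of the case distinction in the statement. In particular, the lower endpoint $p=(d-\lambda+4s)/(d-\lambda+2s)$ satisfies $p<2$, so that $\tau<0$ and direct Sobolev embedding into $L^p$ is unavailable; one must instead either use a Hölder interpolation of $\|\varphi\|_{L^{2p}}$ between the Sobolev endpoint $p_{\max}=d/(d-2s)$ and a separately established endpoint estimate at $p_{\min}$, or pass through a dual pairing exploiting the Riesz kernel. Note that naive approaches such as inverse Hardy--Littlewood--Sobolev ($\|\varphi\|_{L^{4d/(2d-\lambda)}}\leq CI_\lambda^{1/4}$) fail -- well-separated bump configurations show that $\|\varphi\|_{L^{2p}}$ can only be controlled once the positive Sobolev seminorm $\|\varphi\|_{\dot H^s}$ is also present, and the endpoint inequality at $p_{\min}$ is precisely tight on such configurations. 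Managing this interplay cleanly, together with the range conditions inherited from the fractional Leibniz rule in the product step, is the principal technical obstacle.
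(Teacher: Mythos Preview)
Your proposal shares with the paper the key identification $I_\lambda(\varphi)=c_{d,\lambda}\|\,|\varphi|^2\,\|_{\dot H^{-(d-\lambda)/2}}^2$, but diverges at the product step in a way that creates two genuine gaps. First, the estimate $\||\varphi|^2\|_{\dot H^{2s-d/2}}\leq C\|\varphi\|_{\dot H^s}^2$ you propose as ingredient one is \emph{false} for $s>d/2$ (cases (1) and~(2)): take $\varphi_{a,L}=a\chi(\cdot/L)+\psi$ with $\psi\in C_0^\infty$ fixed, $\chi\equiv 1$ near $0$, and $L\gg a^{2/(2s-d)}$; then $\|\varphi_{a,L}\|_{\dot H^s}$ stays bounded while the cross term $2a\chi(\cdot/L)\psi=2a\psi$ in $|\varphi_{a,L}|^2$ forces $\||\varphi_{a,L}|^2\|_{\dot H^{2s-d/2}}\gtrsim a\to\infty$. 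The homogeneous space $\dot H^s$ with $s>d/2$ does not control any $L^\infty$-type quantity, so the ``analogous $L^\infty$-type product estimates'' you invoke do not exist. Second, your third ingredient requires $\dot H^\tau\hookrightarrow L^p$ with $\tau=d/2-d/p$, hence $p\geq 2$; but in case~(5) with $d>4s$ the \emph{entire} admissible interval lies strictly below~$2$ (e.g.\ $d=5$, $s=1$, $\lambda=9/2$ gives $p\in[5/3,9/5]$), so neither endpoint is reachable by your scheme and the proposed H\"older patch has nothing to interpolate between.

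The paper sidesteps both obstacles with one device: instead of bounding $|\varphi|^2$ in an $L^2$-based Sobolev space purely by $\|\varphi\|_{\dot H^s}^2$, it applies a Gagliardo--Nirenberg inequality in mixed-Lebesgue form (Lemma~\ref{fracGN} with $\psi=D^{-(d-\lambda)/2}|\varphi|^2$),
\[
\|\,|\varphi|^2\|_{L^p}\leq C\,\bigl\|D^{-(d-\lambda)/2}|\varphi|^2\bigr\|_{L^2}^{\,1-\theta}\,\bigl\|D^s|\varphi|^2\bigr\|_{L^{2p/(p+1)}}^{\,\theta},
\]
valid for all $1<p<\infty$, and then the fractional chain rule $\|D^s|\varphi|^2\|_{L^{2p/(p+1)}}\lesssim\|D^s\varphi\|_{L^2}\|\varphi\|_{L^{2p}}$ (H\"older exponents $\tfrac12+\tfrac1{2p}=\tfrac{p+1}{2p}$). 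The extra factor $\|\varphi\|_{L^{2p}}^\theta$ on the right is absorbed into $\|\varphi\|_{L^{2p}}^2$ on the left, giving the inequality directly. This absorption trick is precisely what lets the argument run uniformly over all $s>0$ and all $1<p<\infty$ without case analysis; your purely $L^2$-based interpolation forfeits it.
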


The inequality in the proposition for $\lambda=1, s=1,d=3$ is well-known. In this case the relevant range is $2p\in [3, 6]$. Remarkably, it is proved in \cite{R} that this range of $p$ can be increased if the inequality is restricted to radial functions. In the case $\lambda=1$, $s=\frac 12$, $d=3$ the inequality of the proposition was first obtained in \cite{BOV}.

Our second main result states the existence of maximizers for the inequalities
\eqref{converseHLSgen} in the non-endpoint case.

\begin{thm}[Existence of optimizers for \eqref{converseHLSgen}]\label{smag1}
Let  $s, \lambda, d,p$ be as in Theorem \ref{CHLSgen}. Assume, moreover, that
\begin{equation}\label{noendpoint}
p\neq \frac{d-\lambda+4s}{d-\lambda +2s} \,,
\qquad
p\neq \frac{d}{d-2s} \,,
\qquad
p\neq \infty \,.
\end{equation}
Then the best constant in \eqref{converseHLSgen} is achieved.
\end{thm}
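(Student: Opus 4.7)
The plan is to execute the compactness strategy advertised in the introduction, powered by three ingredients: the pqr lemma of Fr\"ohlich--Lieb--Loss (for non-vanishing), the generalized translation lemma (Lemma \ref{LiebIntro}), and the local together with the non-local Br\'ezis--Lieb lemmas (the latter being Theorem \ref{BL}). Let $S$ denote the supremum and $\{\varphi_n\}$ a maximizing sequence. The quotient is invariant under the two-parameter scaling $\varphi\mapsto c\,\varphi(\mu\,\cdot)$; since \eqref{noendpoint} rules out case (4) of Proposition \ref{CHLSgen}, one has $\lambda\neq 4s$, and the two parameters can be used to normalize
$$
\|\varphi_n\|_{\dot H^{s}}^2=1,\qquad
\mathcal E(\varphi_n):=\iint_{\R^d\times\R^d}\frac{|\varphi_n(x)|^2|\varphi_n(y)|^2}{|x-y|^{\lambda}}\,dx\,dy=1,
$$
so that $\|\varphi_n\|_{L^{2p}}^{2p}\to S^{2p}$.

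Next, since \eqref{noendpoint} places $p$ strictly inside the admissible interval, I pick $p_-<p<p_+$ still admissible, apply Proposition \ref{CHLSgen} at $p_\pm$, and combine with the normalization to get uniform bounds $\|\varphi_n\|_{L^{2p_\pm}}\leq C$. The pqr lemma then furnishes $\delta,\eta>0$ with $|\{|\varphi_n|>\delta\}|\geq\eta$ for every $n$, and feeding this together with $\|\varphi_n\|_{\dot H^s}=1$ into Lemma \ref{LiebIntro} yields translations $x_n\in\R^d$ and a subsequence along which $\psi_n:=\varphi_n(\cdot-x_n)$ converges weakly in $\dot H^s$, and pointwise a.e., to some $\varphi\not\equiv 0$. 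Hilbert-space orthogonality in $\dot H^s$, the classical Br\'ezis--Lieb lemma for $\|\cdot\|_{L^{2p}}^{2p}$, and Theorem \ref{BL} for $\mathcal E$ then provide the three additive decompositions $F(\psi_n)=F(\varphi)+F(\psi_n-\varphi)+o(1)$ with $F$ running over these three functionals.

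Writing $h:=\|\varphi\|_{\dot H^s}^2\in(0,1]$ and $e:=\mathcal E(\varphi)\in(0,1]$, I apply \eqref{converseHLSgen} raised to the $2p$-th power separately to $\varphi$ and to $\psi_n-\varphi$, sum the two bounds, and send $n\to\infty$, obtaining
$$
1\leq h^{pa}e^{2pb}+(1-h)^{pa}(1-e)^{2pb},\qquad a:=\tfrac{\theta}{2-\theta},\ 2b:=\tfrac{1-\theta}{2-\theta},
$$
with $a,2b>0$ since \eqref{noendpoint} forces $\theta\in(0,1)$. Weighted AM--GM bounds the right-hand side by $t^q+(1-t)^q$, where $t:=(ah+2be)/(a+2b)\in[0,1]$ and $q:=p(a+2b)=p/(2-\theta)$. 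A direct check using the formula for $\theta$ shows $q>1$ throughout the range constrained by \eqref{noendpoint} (the borderline $q=1$ occurs precisely at $p=(d-\lambda+4s)/(d-\lambda+2s)$), so $t^q+(1-t)^q\leq 1$ on $[0,1]$ with equality only at $t\in\{0,1\}$; combined with the reverse inequality this forces $t\in\{0,1\}$, and hence, by $a,2b>0$, $(h,e)\in\{(0,0),(1,1)\}$. Non-vanishing excludes $(0,0)$, so $h=e=1$, meaning $\psi_n\to\varphi$ strongly in $\dot H^s$ and in the non-local energy; \eqref{converseHLSgen} applied to $\psi_n-\varphi$ then forces $L^{2p}$-convergence, and $\varphi$ attains the supremum.

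The main obstacle I anticipate is not any individual step but the coordination: verifying the non-local Br\'ezis--Lieb decomposition (this being exactly the content of Theorem \ref{BL}) and confirming that the three exclusions in \eqref{noendpoint} simultaneously deliver two distinct pieces of structure---the two-sided interpolation for the pqr lemma on one hand, and the strict positivity $a,2b>0$ together with $q>1$ driving the final convexity argument on the other.
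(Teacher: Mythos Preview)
Your proof is correct and follows the same overall strategy as the paper: normalize by scaling, use Proposition~\ref{CHLSgen} at two neighboring exponents together with the $pqr$ lemma to get non-vanishing, apply Lemma~\ref{LiebIntro} to extract a nonzero weak limit after translation, and then invoke the three splitting identities (Hilbert structure, Br\'ezis--Lieb, and Theorem~\ref{BL}).

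The only visible difference is in the final algebraic step. The paper (referring back to the argument for Theorem~\ref{main}) uses the elementary inequality $(1+x)^\alpha(1+y)^\beta\ge 1+x^\alpha y^\beta$ for $\alpha+\beta\ge 1$, applies \eqref{converseHLSgen} once to the remainder $\psi_n-\varphi$, and concludes directly that $\varphi$ saturates the inequality---without first establishing strong convergence. You instead apply \eqref{converseHLSgen} to both $\varphi$ and $\psi_n-\varphi$, sum, and use weighted AM--GM together with the strict concavity of $t\mapsto t^q+(1-t)^q$ for $q>1$ to force $(h,e)=(1,1)$, i.e.\ strong convergence, from which optimality follows. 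Both arguments hinge on the same scaling computation $q=p/(2-\theta)\ge 1$ (with equality exactly at the excluded endpoint $p=(d-\lambda+4s)/(d-\lambda+2s)$); yours extracts a little more information (compactness of maximizing sequences up to symmetry), at the cost of needing the strict inequality $q>1$, which is indeed guaranteed by \eqref{noendpoint}.
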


We conclude this section by presenting two technical tools which play a crucial role in our compactness proof. To motivate the first one we note that Theorems \ref{main} and \ref{smag1} follow easily provided one is able to show that the weak limit of a maximizing sequence is, up to translation, different from zero. This is exactly the content of the following Lemma \ref{LiebIntro}, which is a generalization of Lieb's compactness lemma \cite{Lieb}.

\begin{lem}[Compactness up to translations in $\dot H^{s}$]\label{LiebIntro}
Let $s>0$, $1<p<\infty$ and $u_n\in \dot H^s(\R^d)\cap L^p(\R^d)$ be a sequence with
\begin{equation}\label{hyp1}
\sup_n \left( \|u_n\|_{\dot H^{s}} +\|u_n\|_{L^p} \right) <\infty
\end{equation}
and, for some $\eta>0$, (with $|\cdot |$ denoting Lebesgue measure)
\begin{equation}\label{hyp2}
\inf_n \left|\{ |u_n|>\eta \}\right|>0 \,.
\end{equation}
Then there is a sequence $(x_n)\subset\R^d$ such that a subsequence of $u_n(\cdot+ x_n)$ has a weak limit $u\not\equiv 0$ in $\dot H^s(\R^d)\cap L^p(\R^d)$.
\end{lem}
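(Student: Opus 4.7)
The plan is Lions' dichotomy.  I will rule out the ``vanishing'' alternative for $u_n$ so as to produce translations capturing a fixed amount of level-set mass of $u_n$ inside a \emph{fixed} bounded region, and then use a local Rellich--Kondrachov argument on the translated sequence to extract a nontrivial weak limit.

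\textbf{Non-vanishing.}  Tile $\R^d$ by unit cubes $\{Q_k\}_{k\in\Z^d}$, write $A_n:=\{|u_n|>\eta\}$ and $a_{n,k}:=|A_n\cap Q_k|$, so that $\sum_k a_{n,k}=|A_n|\geq c>0$ by hypothesis.  I claim $\sup_k a_{n,k}$ is uniformly bounded below.  Otherwise, a Lions-style cube interpolation would yield a contradiction: by the Gagliardo--Nirenberg inequality (Lemma \ref{fracGN}) $u_n$ is bounded in some $L^q$ with $q\neq p$, and applying a localized GN/Sobolev estimate on each cube in the form
$$a_{n,k}\leq\eta^{-q}\int_{Q_k}|u_n|^q\leq C\,\|u_n\|_{L^p(Q_k)}^{q(1-\theta)}\|u_n\|_{\dot H^s}^{q\theta},$$
together with H\"older in the index $k$, one extracts a vanishing factor $\sup_k\|u_n\|_{L^p(Q_k)}$ from $\sum_k a_{n,k}$, forcing $|A_n|\to 0$ and contradicting the hypothesis.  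Hence there exist $c'>0$ and translations $x_n$ with $|A_n\cap(x_n+Q_0)|\geq c'$.

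\textbf{Extracting a nontrivial weak limit.}  Set $v_n(x):=u_n(x+x_n)$; this sequence is bounded in $\dot H^s\cap L^p$ and satisfies $|\{|v_n|>\eta\}\cap Q_0|\geq c'$.  Extract $v_n\rightharpoonup v$ weakly in $\dot H^s$ and in $L^p$.  To show $v\not\equiv 0$, fix a cutoff $\chi\in C^\infty_0$ with $\chi\equiv 1$ on $Q_0$.  Pointwise multiplication by $\chi$ is bounded on $\dot H^s$ for $s>0$, and $\chi v_n$ has compact support, hence $\chi v_n$ is bounded in $L^2$ (via $L^p\hookrightarrow L^2_{\mathrm{loc}}$ when $p\geq 2$, or by applying GN with intermediate exponent $2$ when $p<2$) and thus in $H^s(\R^d)$.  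Rellich--Kondrachov then provides strong $L^1_{\mathrm{loc}}$ convergence along a subsequence, and pointwise a.e.\ convergence along a further subsequence.  Since $\mathbf{1}_{\{|v_n|>\eta\}\cap Q_0}\leq\mathbf{1}_{Q_0}\in L^1$ and $\limsup_n\mathbf{1}_{\{|v_n|>\eta\}}=\mathbf{1}_{\{|v|\geq\eta\}}$ a.e., reverse Fatou gives $|\{|v|\geq\eta\}\cap Q_0|\geq c'>0$, so $v\not\equiv 0$.

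\textbf{Main obstacle.}  The delicate step is the non-vanishing one.  The localized Gagliardo--Nirenberg inequality on a cube is not directly off-the-shelf for non-integer $s$, nor for $s\geq d/2$ where the usual Sobolev embedding fails; one must use the global GN applied to $\chi_k u_n$ for a smooth cube cutoff $\chi_k$, and arrange the H\"older decomposition in the index $k$ so that the $\sup_k\|u_n\|_{L^p(Q_k)}$ factor can be pulled out while the remaining summed terms are still controlled by the bounded global norms $\|u_n\|_{L^p}^p$ and $\|u_n\|_{\dot H^s}^2$.
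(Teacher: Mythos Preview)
Your approach via Lions-type space localization is genuinely different from the paper's, and the gap you flag in your ``Main obstacle'' paragraph is real and not closed by what you wrote.

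The displayed cube inequality
\[
\int_{Q_k}|u_n|^q\;\leq\; C\,\|u_n\|_{L^p(Q_k)}^{q(1-\theta)}\|u_n\|_{\dot H^s}^{q\theta}
\]
is false in general when $s>d/2$.  Take $u_R=\phi(\cdot/R)$ for a bump $\phi$ with $\phi(0)=1$: on $Q_0$ the left side tends to $|\phi(0)|^q>0$ as $R\to\infty$, the local $L^p$ factor tends to $|\phi(0)|^{q(1-\theta)}$, while $\|u_R\|_{\dot H^s}^{q\theta}=R^{(d/2-s)q\theta}\to 0$.  Your proposed repair---apply the global Gagliardo--Nirenberg inequality to $\chi_k u_n$ and then sum in $k$---requires an almost-orthogonality estimate of the type $\sum_k\|\chi_k u_n\|_{\dot H^s}^2\lesssim\|u_n\|_{\dot H^s}^2+\|u_n\|_{L^p}^2$.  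For the \emph{homogeneous} fractional norm this is not available off the shelf: Kato--Ponce gives $\|\chi_k u\|_{\dot H^s}\lesssim\|\chi_k\|_{L^\infty}\|u\|_{\dot H^s}+\|D^s\chi_k\|_{L^{r}}\|u\|_{L^{r'}}$, and the first term is \emph{global}, so it does not sum.  The same $\phi(\cdot/R)$ example also shows that your later claim ``pointwise multiplication by $\chi$ is bounded on $\dot H^s$ for $s>0$'' fails on the homogeneous space when $s>d/2$ (you need the $L^p$ information as well, and you have not said how).

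This is precisely why the paper abandons space localization altogether and works in frequency: it splits $u_n=\chi_0(|D|)u_n+\chi_1(|D|)u_n$, handles the low-frequency piece by the trivial bound $\|\chi_0(|D|)u_n\|_{L^\infty}+\|\nabla\chi_0(|D|)u_n\|_{L^\infty}\lesssim\|u_n\|_{L^p}$ (so a point of height $>\eta/2$ survives in the weak limit), and handles the high-frequency piece via the refined Sobolev inequality of G\'erard--Meyer--Oru, which produces a translation \emph{and} a dilation parameter; a separate argument then shows the dilation parameter stays bounded.  That route never multiplies by spatial cutoffs and hence sidesteps the commutator and summability issues above.  If you want to salvage the Lions scheme for all $s>0$, you would need to supply a genuine substitute for the cube almost-orthogonality in $\dot H^s$; as written, the non-vanishing step is incomplete.
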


Lemma \ref{LiebIntro} for $s=1$, $p=2$ is due to Lieb \cite{Lieb}.

Note that as in theorem \ref{main} we do \emph{not} assume that $s<d/2$. If this is satisfied, however, then $\sup_n \|u_n\|_{\dot H^s}<\infty$ implies $\sup_n \|u_n\|_{L^p}<\infty$ for $p=2d/(d-2s)$ by a Sobolev inequality and therefore the statement of the lemma can be simplified. In particular, for $s=1$ we recover the version of Lieb's compactness lemma as proved in \cite{BB}.

Lieb's proof of the compactness lemma for $s=1$ depends heavily on the pointwise chain rule for the gradient. 
Since this is not available for general $s$ we develop a different approach. Instead of space localization, which is more complicated to handle in the case of fractional Sobolev spaces, we shall exploit momentum localization. We will also make use of a refined Sobolev inequality in Besov spaces, see Theorem \ref{gmo}. Since we do not assume that $s<d/2$ we need an additional ingredient, in addition to the refined Sobolev inequality. More precisely, we treat separately the low- and high-frequency parts of the sequence.

Our second technical ingredient is the following `non-local version' of the Br\'ezis--Lieb lemma.
\begin{lem}\label{BL}
Let $0<\lambda<d$, let $2<p<\infty$ and let $f_n$ and $f$ be functions on $\R^d$ such that
\begin{equation}
\label{punt}
f_n(x)\rightarrow f(x) \hbox{ a.e. } x\in \R^d \,,
\end{equation}
\begin{equation}\label{nonlocal}
\sup_n \iint_{\R^d\times\R^d} \frac{|f_n(x)|^2|f_n(y)|^2}{|x-y|^\lambda}dxdy <\infty \,,
\end{equation}
\begin{equation}\label{sobolev}
\sup_n \|f_n\|_{L^p}<\infty \,.
\end{equation}
Then
\begin{align}
\iint_{\R^d\times\R^d} \frac{|f_n(x)|^2|f_n(y)|^2}{|x-y|^\lambda}dxdy 
= & \iint_{\R^d\times\R^d} \frac{|f(x)|^2|f(y)|^2}{|x-y|^\lambda}dxdy \notag \\
& + \iint_{\R^d\times\R^d} \frac{|f_n(x)-f(x)|^2 |f_n(y)-f(y)|^2}{|x-y|^\lambda} dxdy + o(1) \label{con} \,.
\end{align}
\end{lem}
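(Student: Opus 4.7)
My plan is to expand the Riesz energy via the bilinear structure of its kernel and reduce the question to showing that certain cross terms vanish. Set $g_n := f_n - f$, so $g_n \to 0$ a.e., and $F_n := |f_n|^2 - |g_n|^2 - |f|^2 = 2\re(g_n \bar f)$. First one checks by Fatou that $f\in L^p$ with $\iint|f(x)|^2|f(y)|^2|x-y|^{-\lambda}dxdy<\infty$, and from the pointwise bound $|g_n|^2\le 2|f_n|^2+2|f|^2$ together with Cauchy--Schwarz in the Riesz kernel that $\sup_n \iint|g_n(x)|^2|g_n(y)|^2|x-y|^{-\lambda}dxdy <\infty$. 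Substituting $|f_n|^2=|g_n|^2+|f|^2+F_n$ and writing $\langle u,v\rangle_{\mathcal K}:=\iint u(x)v(y)|x-y|^{-\lambda}\,dx\,dy$ for the (positive semidefinite) Riesz inner product, a direct expansion yields
\[
\iint\frac{|f_n(x)|^2|f_n(y)|^2-|g_n(x)|^2|g_n(y)|^2-|f(x)|^2|f(y)|^2}{|x-y|^\lambda}\,dx\,dy = 2\langle|g_n|^2,|f|^2\rangle_{\mathcal K}+2\langle|g_n|^2,F_n\rangle_{\mathcal K}+2\langle|f|^2,F_n\rangle_{\mathcal K}+\|F_n\|_{\mathcal K}^2,
\]
so it suffices to show that each of these four cross terms is $o(1)$.

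For the first cross term I use weak convergence. Since $|g_n|^2\to 0$ a.e.\ and is bounded in $L^{p/2}$ (with $p/2>1$), we have $|g_n|^2\rightharpoonup 0$ weakly in $L^{p/2}$. The set of $w$ whose Fourier transform lies in $C_c^\infty(\R^d\setminus\{0\})$ is dense in the Riesz Hilbert space (identifiable with $\dot H^{-(d-\lambda)/2}$ via $\widehat{|x|^{-\lambda}}(\xi)=c|\xi|^{\lambda-d}$), and for any such $w$ the Riesz potential $I_{d-\lambda}w$ is Schwartz and in particular lies in $L^{(p/2)'}$. Therefore $\langle|g_n|^2,w\rangle_{\mathcal K}=\int|g_n|^2(I_{d-\lambda}w)\,dx\to 0$; combined with the uniform Riesz bound $\||g_n|^2\|_{\mathcal K}\le C$, this extends by density to give $|g_n|^2\rightharpoonup 0$ weakly in the Riesz space, and choosing the test element to be $|f|^2$ kills the first cross term.

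The decisive remaining step is to prove $\|F_n\|_{\mathcal K}\to 0$; once this is in hand, the Cauchy--Schwarz inequality $|\langle v,F_n\rangle_{\mathcal K}|\le\|v\|_{\mathcal K}\|F_n\|_{\mathcal K}$ with $v=|g_n|^2$ and $v=|f|^2$ (both uniformly bounded in $\mathcal K$) dispatches the two remaining cross terms. To establish $\|F_n\|_{\mathcal K}\to 0$ I would first deduce from Vitali's convergence theorem that $F_n\to 0$ strongly in $L^{p/2}$: the pointwise bound $|F_n|\le 2|g_n||f|$, the uniform bound on $\|g_n\|_p$, and the absolute continuity of $\int|f|^p\,dx$ (applied both on small-measure sets and on large-$|x|$ tails) together yield uniform integrability of $|F_n|^{p/2}$. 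I then combine this $L^{p/2}$-decay with the Hardy--Littlewood--Sobolev inequality in the form $\iint|F_n(x)||F_n(y)||x-y|^{-\lambda}\,dx\,dy\le C\|F_n\|_{r_1}\|F_n\|_{r_2}$, $1/r_1+1/r_2=2-\lambda/d$, and interpolate against the uniform Riesz bound on $F_n$. This last step is the main obstacle: the self-dual HLS exponent $r=2d/(2d-\lambda)$ need not coincide with $p/2$, so a careful bootstrapping (possibly via density reduction to $f\in C_c^\infty$, where $I_{d-\lambda}(|f|^2)$ has explicit Schwartz decay and hence lies in every $L^q$, $q>d/\lambda$) is needed to cover the full range $p\in(2,\infty)$.
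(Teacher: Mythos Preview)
Your decomposition is the same as the paper's: with $F_n = 2r_n$ in the paper's notation, your four cross terms are exactly (up to constants) the paper's $\mathcal A$ and $\mathcal R_1,\mathcal R_2,\mathcal R_3$. Your treatment of $\langle |g_n|^2,|f|^2\rangle_{\mathcal K}$ via weak $L^{p/2}$-convergence plus density in $\dot H^{-(d-\lambda)/2}$ is essentially the paper's Lemma~\ref{BL2}, and the reduction of $\langle |g_n|^2,F_n\rangle_{\mathcal K}$ and $\langle |f|^2,F_n\rangle_{\mathcal K}$ to $\|F_n\|_{\mathcal K}$ by Cauchy--Schwarz is also what the paper does.

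The genuine gap is precisely where you flag it: you do not prove $\|F_n\|_{\mathcal K}\to 0$. Vitali gives $F_n\to 0$ in $L^{p/2}$, but there is no interpolation between $L^{p/2}$ and boundedness in $\mathcal K$ that yields decay in $\mathcal K$; weak convergence of $F_n$ to $0$ in $\mathcal K$ (which your argument does give) is not enough. Your density idea would need $f^{(k)}\in C_c^\infty$ with both $f^{(k)}\to f$ in $L^p$ \emph{and} $\||f-f^{(k)}|^2\|_{\mathcal K}\to 0$, since the error term $2\re(g_n\overline{(f-f^{(k)})})$ is controlled in $\mathcal K$ only through the Cauchy--Schwarz bound $\|g_n h\|_{\mathcal K}^2\le \||g_n|^2\|_{\mathcal K}\,\||h|^2\|_{\mathcal K}$; this simultaneous approximation is not immediate from the hypotheses.

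The paper bypasses all of this with a one-line trick (its Lemma~\ref{BL1}): apply the \emph{scalar} Br\'ezis--Lieb lemma to the sequence
\[
G_n(x,y):=\overline{f_n(x)}\,|x-y|^{-\lambda/2}\,f_n(y)\ \in\ L^2(\R^d\times\R^d),
\]
which converges a.e.\ and is $L^2$-bounded by \eqref{nonlocal} alone. Expanding $\|G_n\|_{L^2}^2 = \|G\|_{L^2}^2 + \|G_n-G\|_{L^2}^2 + o(1)$ and a second application of Br\'ezis--Lieb to $f_n(y)$ with respect to the weighted measure $\bigl(\int|f(x)|^2|x-y|^{-\lambda}\,dx\bigr)dy$ give directly
\[
\iint_{\R^d\times\R^d}\frac{\overline{s_n(x)}\,s_n(y)}{|x-y|^\lambda}\,dx\,dy = o(1),\qquad s_n:=\overline f\,g_n,
\]
and then $\|F_n\|_{\mathcal K}^2 = 4\mathcal R_1 = o(1)$ follows by positive definiteness of the kernel. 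This uses neither HLS nor any approximation of $f$, and works for every $p>2$ without case distinctions.
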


This lemma is due to \cite{MS} in the special case $p=2d/(2d-\lambda)$. In this special case one can use the Hardy--Littlewood--Sobolev inequality to control the double integral in \eqref{nonlocal} by the norm in \eqref{sobolev}. We do not see how the argument in \cite{MS} can be generalized to arbitrary $2<p<\infty$ and we provide an independent proof.

For easier reference we state

\begin{lem}[\emph{pqr} Lemma \cite{FLL}]\label{Lieb}
Let $1\leq p<q<r\leq\infty$ and let $\alpha, \beta, \gamma>0$. Then there are constants $\eta,c>0$ such that for any measurable function $f\in L^p(X)\cap L^r(X)$, $X$ a measure space, with
\begin{equation*}
\|f\|_{L^p}^p\leq \alpha, \quad
\|f\|_{L^q}^q\geq \beta, \quad
\|f\|_{L^r}^r\leq \gamma, \quad
\end{equation*}
one has (with $|\cdot|$ denoting the underlying measure on $X$)
\begin{equation}\label{statement}
\left|\{ x \in X :\ |f(x)|>\eta \}\right| \geq c \,.
\end{equation}
\end{lem}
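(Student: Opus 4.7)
The plan is a direct truncation argument: I split the $L^q$ integral of $|f|^q$ according to whether $|f(x)|$ falls below or above a threshold $\eta$, and then choose $\eta$ small enough that the sub-threshold part is controlled by the $L^p$ hypothesis while the super-threshold part is controlled by the $L^r$ hypothesis.

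First I will use that, since $q>p$, the pointwise inequality $|f|^q \le \eta^{q-p} |f|^p$ holds on $\{|f|\le\eta\}$, so that
$$
\int_{\{|f|\le \eta\}} |f|^q\, d\mu \le \eta^{q-p} \|f\|_{L^p}^p \le \eta^{q-p}\alpha.
$$
Picking $\eta := (\beta/(2\alpha))^{1/(q-p)}$ forces this quantity to equal $\beta/2$, and combining with $\|f\|_{L^q}^q \ge \beta$ yields
$$
\int_A |f|^q\, d\mu \ge \frac{\beta}{2}, \qquad A := \{x\in X : |f(x)|>\eta\}.
$$

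Next I estimate the same integral from above in terms of $|A|$. If $r<\infty$, Hölder's inequality with conjugate exponents $r/q$ and $r/(r-q)$ gives
$$
\int_A |f|^q\, d\mu \le \left(\int_A |f|^r\, d\mu\right)^{q/r} |A|^{1-q/r} \le \gamma^{q/r}|A|^{1-q/r},
$$
so that $|A|\ge c$ with $c := (\beta/2)^{r/(r-q)}\gamma^{-q/(r-q)}$. If $r=\infty$ (interpreting the hypothesis as $\|f\|_{L^\infty}\le \gamma$), I replace this step with the cruder bound $\int_A|f|^q\, d\mu \le \gamma^q|A|$, yielding $|A|\ge \beta/(2\gamma^q)$.

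The entire argument is essentially a Chebyshev-type estimate paired with a well-chosen truncation level, so I do not anticipate any real obstacle: the constants $\eta$ and $c$ emerge from explicit elementary calculations and depend only on $p,q,r,\alpha,\beta,\gamma$. The only mild subtlety is the case $r=\infty$, where Hölder's inequality degenerates and must be replaced by a pointwise $L^\infty$ bound; both branches deliver the same uniform lower bound on $|\{|f|>\eta\}|$.
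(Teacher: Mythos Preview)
Your argument is correct and is essentially the standard proof of the \emph{pqr} Lemma. The paper itself does not supply a proof of this lemma; it merely states the result for easier reference and cites \cite{FLL}, so there is nothing to compare against beyond noting that your truncation-and-H\"older argument is precisely the one given in the original reference.
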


The paper is organized as follows: Sections \ref{main2} and \ref{section3} are devoted to the proof of Theorems~\ref{main} and \ref{smag1}, respectively. Sections \ref{section4} and \ref{sec:bl} contain the proof of Lemmas \ref{LiebIntro} and \ref{BL}. In the appendix we derive Proposition \ref{CHLSgen}.  


\section{Existence of optimizers for the Gagliardo--Nirenberg inequality}\label{main2}

In the proof of Theorem \ref{main} we assume the fact that $C(r,s,p,q,d)<\infty$. As we mentioned in the introduction, this is essentially contained in \cite{GiVe} (and can be deduced by the same method as in our Lemma \ref{fracGN} below) and is explicitly stated, for instance, in \cite{NP}.

Since inequality \eqref{GagliNiren} is invariant under 
homogeneity $\varphi(x) \mapsto \lambda \varphi(x)$ and scaling 
$\varphi \mapsto \varphi(\lambda x)$ for any $\lambda>0$, we can choose a maximizing sequence $\varphi_n$ such that 
\begin{equation}\label{0.0}
\|D^r \varphi_n\|_{L^q}= C(r,s,p,q,d)+o(1)
\end{equation}
and
\begin{equation}\label{0.1}
\|\varphi_n\|_{L^p}=\|D^s \varphi_n\|_{L^2}=1 \,.
\end{equation}

The key observation is that, since we are looking at a non-endpoint case (i.e., $r/s<\theta<1$), there are $q_1<q<q_2$ such that the scaling relations
$$
-r + \frac{d}{q_j}=(1-\theta_j)\frac{d}{p} + \theta_j\left(-s+\frac{d}{2}\right)
$$
are satisfied for some $\theta_j$ satisfying $r/s<\theta_j<1$. From inequality \eqref{GagliNiren} with these values of $q_j$ (and the same $r$, $s$ and $p$) we infer that
\begin{equation}\label{inequ}
\sup_n \max\left\{\|D^r \varphi_n\|_{L^{q_1}}, 
\|D^r \varphi_n\|_{L^{q_2}}\right\}<\infty \,.
\end{equation} 
The $pqr$-lemma (Lemma \ref{Lieb}) now implies that
\begin{equation}\label{superlevel}
\inf_n \left|\{ |D^r \varphi_n|>\eta \}\right| >0
\end{equation}
for some $\eta>0$.

Next, we apply the compactness modulo translations lemma (Lemma \ref{LiebIntro}) to the sequence $(D^r \varphi_n)$. This sequence is bounded in $\dot H^{s-r}$ by \eqref{0.1}, and \eqref{hyp1} and \eqref{hyp2} are satisfied by \eqref{0.0} and \eqref{superlevel}. Thus, there is a sequence $(x_n)\subset\R^d$ such that, after passing to a subsequence if necessary, we have $D^r \varphi_n (\cdot+x_n) \rightharpoonup \psi\not\equiv 0$ in $\dot H^{s-r}\cap L^q$.

The translated sequence $\tilde\varphi_n(x)=\varphi_n(x+x_n)$ still satisfies \eqref{0.0} and \eqref{0.1} and therefore has a subsequence which converges weakly in $\dot H^{s}\cap L^p$ to some $\tilde\varphi$. For $\chi\in C_0^\infty(\R^d)$ we have that
$$
\int_{\R^d} \tilde\varphi D^r \chi \,dx 
= \lim_{n\to\infty} \int_{\R^d} \varphi(x+x_n) D^r \chi \,dx
= \lim_{n\to\infty} \int_{\R^d} D^r \varphi(x+x_n) \chi \,dx
= \int_{\R^d} \psi \chi \,dx \,,
$$
and therefore $D^r \tilde\varphi = \psi$. In particular, $\tilde\varphi\not\equiv 0$.

By Rellich's compactness theorem we can pass to a further subsequence to ensure that
\begin{equation}
\label{eq:aeconv}
D^r \tilde\varphi_n \to D^r \tilde\varphi \quad \text{a.e.}
\qquad\text{and}\qquad
\tilde\varphi_n \to \tilde\varphi \quad \text{a.e.}
\end{equation}

It remains to prove that $\tilde\varphi$ is an optimizer. In order to simplify notation we drop the tildes (that is, we assume that the $\varphi_n$ are chosen at the beginning so that $x_n=0$). By \eqref{0.1} and the Hilbert structure of $\dot H^s$ we get
\begin{equation}\label{eq3}
\|D^s \varphi_n-D^s \varphi\|_{L^2}^2+\|D^s \varphi\|_{L^2}^2=1+o(1) \,,
\end{equation}
and by \eqref{0.0}, \eqref{0.1}, \eqref{eq:aeconv} and the Br\'ezis--Lieb lemma \cite{BL} we get
\begin{equation}\label{eq1}
\|D^r \varphi_n-D^r \varphi\|_{L^q}^q+\| D^r \varphi\|_{L^q}^q=C^q+o(1)
\end{equation}
and
\begin{equation}\label{eq2}
\|\varphi_n-\varphi\|_{L^p}^p+\| \varphi\|_{L^p}^p=1+o(1) \,,
\end{equation}
where we abbreviated $C=C(r,s,p,q,d)$. By combining \eqref{eq1}, \eqref{eq2}, \eqref{eq3} we get
\begin{align}
\label{bycont}
& C^{-q} \left( \| D^r \varphi\|_{L^q}^q + \|D^r \varphi_n-D^r \varphi\|_{L^q}^q+o(1) \right) \\
& = \left( \| \varphi\|_{L^p}^p + \|\varphi_n-\varphi\|_{L^p}^p+ o(1) \right)^{\frac{q(1-\theta)}p} \left( \|D^s \varphi\|_{L^2}^2+ \|D^s \varphi_n-D^s \varphi\|_{L^2}^2+o(1) \right)^{\frac{\theta q}2} \,. \notag
\end{align}
In view of the elementary inequality
$$
(1+x)^\alpha (1+y)^\beta \geq 1 + x^\alpha y^\beta
$$
for all $x,y\geq 0$ if $\alpha+\beta\geq 1$, we have
$$
\left( a^p + b^p \right)^{\frac{(1-\theta)q}{p}} \left(c^2+d^2 \right)^{\frac{\theta q}{2}} \geq a^{(1-\theta)q} c^{\theta q} + b^{(1-\theta)q} d^{\theta q}
$$
for all $a,b,c,d\geq 0$. Here we used the fact that, by the scaling relation
$$
\frac{(1-\theta)q}{p} + \frac{\theta q}{2} = 1 + \frac{qs}{d}\left(\theta -\frac{r}{s} \right) \geq 1 \,.
$$
Therefore, we can bound the right side of \eqref{bycont} from below by
$$
\| \varphi\|_{L^p}^{(1-\theta)q} \| D^s\varphi \|_{L^2}^{\theta q} + \| \varphi_n-\varphi\|_{L^p}^{(1-\theta)q} \| D^s\varphi_n - D^s\varphi \|_{L^2}^{\theta q}  + o(1) \,,
$$
which in turn can be bounded from below via \eqref{GagliNiren} by
$$
\| \varphi\|_{L^p}^{(1-\theta)q} \| D^s\varphi \|_{L^2}^{\theta q} + 
C^{-q} \|D^r \varphi_n-D^r \varphi\|_{L^q}^q + o(1) \,.
$$
We insert this lower bound in \eqref{bycont} and conclude that
$$
C^{-q} \| D^r \varphi\|_{L^q}^q +o(1) \geq \| \varphi\|_{L^p}^{(1-\theta)q} \| D^s\varphi \|_{L^2}^{\theta q} + o(1) \,,
$$
which means that $\varphi$ is an optimizer. This completes the proof.
\qed


\section{Proof of Theorem \ref{smag1}}\label{section3}

The argument is similar to that used in the proof of Theorem \ref{main}. Again by homogeneity and scaling we can assume that an optimizing sequence $\varphi_n\in\mathcal H^{s,\lambda}(\R^d)$ satisfies
$$
\|\varphi_n \|_{\dot H^s}=
\iint_{\R^d\times\R^d} \frac{|\varphi_n(x)|^2\ |\varphi_n(y)|^2}{|x-y|^\lambda} \,dx\,dy = 1
$$
and
$$
\| \varphi_n \|_{L^{2p}} = C(p,d,s,\lambda) + o(1) \,.
$$
Since we are not at the endpoint, we can argue as in the proof of Theorem \ref{main} and find a uniform upper bound on $\|\varphi_n\|_{L^{2p_1}}$ and $\|\varphi_n\|_{L^{2p_2}}$ for some $p_1<p<p_2$. Therefore, by Lemma~\ref{Lieb}
$$
\inf_n \left|\{ |\varphi_n|>\eta \}\right| >0 \,.
$$
Thus, after a translation if necessary, we may assume that there is a $\varphi\not\equiv 0$ such that $\varphi_n\rightharpoonup\varphi\in\dot H^s(\R^d)\cap L^p(\R^d)$. Thus, by the Hilbert structure of $\dot H^s(\R^d)$,
\begin{equation}
\label{eq:decomp2t}
\|\varphi_n - \varphi\|_{\dot H^s}^2 + \|\varphi\|_{\dot H^s}^2 = 1+ o(1) \,.
\end{equation}

By Rellich's compactness theorem we may assume, after passing to another subsequence if necessary, that $\varphi_n\to\varphi$ almost everywhere, and therefore by the Brezis--Lieb lemma \cite{BL},
\begin{equation}
\label{eq:decomp2m}
\|\varphi_n - \varphi\|_{L^{2p}}^{2p} + \|\varphi\|_{L^{2p}}^{2p} = C^{2p} + o(1) \,,
\end{equation}
where we abbreviated $C=C(p,d,s,\lambda)$.

Finally, by our non-local Brezis--Lieb lemma (Theorem \ref{BL}),
\begin{equation}
\label{eq:decomp2u}
\iint_{\R^d\times\R^d} \frac{|\varphi_n(x)-\varphi(x)|^2\ |\varphi_n(y)-\varphi(y)|^2}{|x-y|^\lambda} \,dx\,dy +
\iint_{\R^d\times\R^d} \frac{|\varphi(x)|^2\ |\varphi(y)|^2}{|x-y|^\lambda} \,dx\,dy = 1 + o(1) \,.
\end{equation}

With \eqref{eq:decomp2t}, \eqref{eq:decomp2m} and \eqref{eq:decomp2u} at hand, the proof of Theorem \ref{smag1} follows along the same lines as that of Theorem \ref{main}.


\section{Compactness up to translations} \label{section4}

Our goal in this section is to prove Lemma \ref{LiebIntro}. We begin by recalling the refined Sobolev inequality of Gerard--Meyer--Oru, see \cite{GMO}, in the form given in \cite{BCD}.

\begin{thm}\label{gmo}
Let $0<s<d/2$ and let $\theta\in\mathcal S(\R^d)$ be such that $\hat\theta$ has compact support, has value $1$ near the origin and satisfies $0\leq\hat\theta\leq 1$. Then there is a constant $C=C_{s,d}(\theta)$ such that for all $u\in \dot H^s(\R^d)$,
$$
\|u\|_q \leq C \| u\|_{\dot H^s}^{2/q} \left( \sup_{A>0} A^{d/2+s} \|\theta(A\,\cdot) \star u\|_\infty \right)^{1-2/q}
$$
with $q=2d/(d-2s)$.
\end{thm}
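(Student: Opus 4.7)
My plan is to use the layer-cake representation of $\|u\|_q^q$ together with a frequency decomposition whose cutoff scale is adapted to the layer-cake threshold.  Setting $M = \sup_{A>0} A^{d/2+s}\|\theta(A\,\cdot)\star u\|_\infty$ and writing $\widehat{P_{<A}u}(\xi) = \hat\theta(\xi/A)\hat u(\xi)$ for the low-pass filter, the identity $\mathcal F[\theta(A\,\cdot)](\xi) = A^{-d}\hat\theta(\xi/A)$ rewrites the hypothesis as $\|P_{<A}u\|_\infty \leq MA^{d/2-s}$ for every $A>0$.  For each threshold $\alpha>0$ I choose $A=A(\alpha) = (\alpha/(2M))^{2/(d-2s)}$ so that $\|P_{<A}u\|_\infty \leq \alpha/2$; then $\{|u|>\alpha\}\subset\{|u-P_{<A}u|>\alpha/2\}$, and Chebyshev's inequality gives $|\{|u|>\alpha\}|\leq 4\alpha^{-2}\|u-P_{<A}u\|_2^2$.

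Inserting this into $\|u\|_q^q = q\int_0^\infty \alpha^{q-1}|\{|u|>\alpha\}|\,d\alpha$ and changing variables from $\alpha$ to $A$, the resulting power of $A$ collapses (using the identity $q-2 = 4s/(d-2s)$ forced by scaling) to
$$
\|u\|_q^q \leq C\, M^{q-2}\int_0^\infty A^{2s-1}\|u-P_{<A}u\|_2^2\, dA .
$$
To bound this integral by $\|u\|_{\dot H^s}^2$ I use that $\hat\theta = 1$ on some ball $\{|\xi|\leq\rho\}$ and $\hat\theta\leq 1$ globally, so $1-\hat\theta(\xi/A)$ vanishes on $\{|\xi|\leq\rho A\}$ and is bounded by one elsewhere, whence $\|u-P_{<A}u\|_2^2\leq \int_{|\xi|>\rho A}|\hat u(\xi)|^2\,d\xi$.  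Tonelli's theorem then swaps the order of integration:
$$
\int_0^\infty A^{2s-1}\int_{|\xi|>\rho A}|\hat u|^2\,d\xi\,dA = \int_{\R^d}|\hat u(\xi)|^2\int_0^{|\xi|/\rho}A^{2s-1}\,dA\,d\xi = \frac{\rho^{-2s}}{2s}\|u\|_{\dot H^s}^2 .
$$
Taking $q$-th roots (with $q(1-2/q)=q-2$) produces the claimed inequality.

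The principal subtlety, such as it is, is a Tonelli technicality: for $u\in\dot H^s$ without extra integrability the integrand $A^{2s-1}\|u-P_{<A}u\|_2^2$ may fail to be pointwise integrable near $A=0$, but nonnegativity makes Tonelli unconditional and reduces the question to finiteness of the swapped integral, which is manifest.  The condition $s<d/2$ is essential both for defining $A(\alpha)$ as a positive power of $\alpha/M$ and for $A^{2s-1}$ to be the correct weight in the Tonelli calculation; by contrast, the compactness of the support of $\hat\theta$ plays no role in the argument, only the conditions $\hat\theta = 1$ near zero and $\hat\theta\leq 1$ everywhere.
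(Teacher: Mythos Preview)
Your argument is correct. The layer-cake decomposition with frequency cutoff adapted to the threshold, followed by Chebyshev and the Tonelli swap, is precisely the standard proof of this refined Sobolev inequality; every step checks (the change of variables from $\alpha$ to $A$ indeed produces the exponent $2s-1$, and $\|u-P_{<A}u\|_2$ is finite for each $A>0$ since the multiplier $1-\hat\theta(\cdot/A)$ is supported on $\{|\xi|\geq\rho A\}$ where $|\hat u|^2\leq(\rho A)^{-2s}|\xi|^{2s}|\hat u|^2$). Two cosmetic points: you implicitly assume $0<M<\infty$, but the cases $M=0$ (forcing $u=0$) and $M=\infty$ are trivial; and your closing observation that the compact support of $\hat\theta$ is not used here is accurate---the paper needs it only later, in Case~B of the proof of Lemma~\ref{LiebIntro}, to bound the dilation parameters $A_n$ away from zero.

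As for the comparison: the paper does not prove Theorem~\ref{gmo} at all. It is quoted as the refined Sobolev inequality of G\'erard--Meyer--Oru, with references to \cite{GMO} and \cite{BCD}, and then used as a black box. Your proof is essentially the argument one finds in \cite{BCD}, so there is nothing to contrast on the method; you have simply supplied what the paper chose to cite.
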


One can show that the supremum on the right side is equivalent to the norm in a certain Besov space, but we will not need this fact.

Several applications of this improvement have been given in the literature.
In particular we quote the profile decomposition theorems in \cite{Ge}, \cite{Ke}
and its generalization in \cite{FV}. The main point is that the 
use of the aforementioned improved Sobolev embedding involves in principle
the introduction of suitable dilation and translation parameters. However we notice a-posteriori that the
introduction of the dilation parameter can be ignored, since it stays bounded away from zero and infinity. Hence we get at the end 
a proof of Lemma
\ref{LiebIntro} where only translation parameters are involved.  

We are now in position to give the

\begin{proof}[Proof of Lemma \ref{LiebIntro}]
Let $\chi_0$ and $\chi_1$ be smooth, real-valued functions on $[0,\infty)$ such that $\chi_0 +\chi_1 \equiv 1$ and such that $\chi_0$ has compact support and value $1$ near the origin. We decompose $u_n = \chi_0(|D|)u_n + \chi_1(|D|)u_n$ and, because of \eqref{hyp2}, we may assume that either 
\begin{equation}\label{caseA}
\inf_n \left|\left\{ |\chi_0(|D|) u_n|>\frac{\eta}{2}\right\}\right|>0   \ \ \ \text{ (case A)}
\end{equation}
or
\begin{equation}\label{caseB}                   
\inf_n \left|\left\{ |\chi_1(|D|) u_n|>\frac{\eta}{2}\right\}\right|>0  \ \ \ \text{ (case B)} \,.
\end{equation}
We discuss these two cases separately.

\emph{Case A. Low frequency localization.}
The operator $\chi_0(|D|)$ is a convolution operator with respect to the Schwartz function
$$
h(x)= \int_{\R^d} \chi_0(|\xi|) e^{-i\xi\cdot x} \frac{d\xi}{(2\pi)^d} \,.
$$
Therefore, the classical Young inequality implies that
$$
\|\chi_0(|D|) \varphi\|_{L^\infty}\leq \|h\|_{L^{p'}} \|\varphi\|_{L^p}
\quad\text{and}\quad
\|\nabla \chi_0(|D|) \varphi\|_{L^\infty}\leq \|\nabla h\|_{L^{p'}} \|\varphi\|_{L^p}
$$
for any $1\leq p \leq \infty$. In particular, the functions $\chi_0(|D|)u_n$ are continuous and, because of \eqref{caseA}, there are $x_n\in\R^d$ such that $|(\chi_0(|D|)u_n)(x_n)|>\eta/2$.

The translated sequence $v_n(x)=u_n(x+x_n)$ satisfies again \eqref{hyp1}, and therefore has a subsequence which converges weakly in $\dot H^s \cap L^p$ to some $v$. Since $h\in L^{p'}$, we have
$$
(\chi_0(|D|)u_n)(x_n) = \int_{\R^d} h(x_n-y) u_n(y) \,dy = \int_{\R^d} h(-x) v_n(x) \,dx \to \int_{\R^d} h(-x) v(x) \,dx 
$$
as $n\to\infty$ along the chosen subsequence. Since the absolute value of the left side is bigger than $\eta/2$, we conclude that $v\not\equiv 0$.

\emph{Case B. High frequency localization.}
Pick $\sigma< s$ with $\sigma<d/2$ and let $q=2d/(d-2\sigma)$. Then, since $\chi_1$ is supported away from the origin, \eqref{hyp1} implies that $\sup_n \|\chi_1(|D|)u_n\|_{\dot H^\sigma} <\infty$. Moreover, \eqref{caseB} implies that $\inf_n \|\chi_1(|D|)u_n\|_q>0$. Thus, by Theorem \ref{gmo}, there are $A_n>0$ and $x_n\in\R^d$ such that
$$
\inf_n A_n^{d/2+\sigma} \left|\left(\theta(A_n \,\cdot) \star \chi_1(|D|)u_n\right)(x_n)\right| >0 \,.
$$
Since $\hat\theta$ has compact support and $\chi_1$ is supported away from zero, the operator $u\mapsto \theta(A \,\cdot) \star \chi_1(|D|)u$ is zero for all sufficiently small $A$. Thus, $\inf_n A_n >0$.

The translated and dilated sequence $w_n(x) = A_n^{-d/2+\sigma} \left( \chi_1(|D|)u_n \right)(A_n^{-1}x+x_n)$ satisfies
$$
\sup_n \|w_n\|_{\dot H^\sigma} = \sup_n \|\chi_1(|D|)u_n\|_{\dot H^\sigma}<\infty \,,
$$
and therefore has a subsequence which converges weakly in $\dot H^\sigma\cap L^q$ to some $w$. Since $\theta\in L^{q'}$, we have
\begin{align*}
A_n^{d/2+\sigma} \left(\theta(A_n \,\cdot) \star \chi_1(|D|)u_n\right)(x_n) & = A_n^{d/2+\sigma} \int_{\R^d} \theta(A_n(x_n-y)) \left( \chi_1(|D|)u_n\right)(y) \,dy \\
& = \int_{\R^d} \theta(-x) w_n(x) \,dx \\
& \to \int_{\R^d} \theta(-x)w(x) \,dx
\end{align*}
as $n\to\infty$ along the chosen subsequence. Since the absolute value of the left side remains bounded away from zero, we conclude that $w\not\equiv 0$.

We claim that the $\limsup$ of $A_n$ along the chosen subsequence is not infinite. Indeed, by our choice of $\sigma$ there is a $\tilde\sigma$ with $\sigma<\tilde\sigma<\min\{s,d/2\}$. Then again $\sup_n \|\chi_1(|D|)u_n\|_{\dot H^{\tilde\sigma}} <\infty$, and thus Sobolev's inequality implies that $\sup_n\|\chi_1(|D|)u_n\|_{L^{\tilde q}}<\infty$ for $\tilde q=2d/(d-2\tilde\sigma)$. By scaling we have
$$
\|\chi_1(|D|)u_n\|_{L^{\tilde q}}^{\tilde q} = A_n^{d(\tilde q/q -1)} \| w_n\|_{L^{\tilde q}}^{\tilde q} \,.
$$
An elementary fact about weak convergence (see Lemma \ref{weakconv} below) implies that (along the chosen subsequence)
$$
\liminf_{n\to\infty} \|w_n\|_{L^{\tilde q}} \geq \|w\|_{L^{\tilde q}} \,,
$$
where the right side might (a priori) be $+\infty$, but is never zero. But the $L^{\tilde q}$-boundedness now implies that
$$
\infty> \sup\|\chi_1(|D|)u_n\|_{L^{\tilde q}}^{\tilde q} \geq \limsup_{n\to\infty} A_n^{d(\tilde q/q -1)} \|w\|_{L^{\tilde q}}^{\tilde q} \,.
$$
Since $\tilde q>q$ and $\inf_n A_n>0$ we conclude that $w\in L^{\tilde q}$ and $\limsup_n A_n <\infty$. Thus, up to extracting another subsequence, we may assume that $A_n\to A\in (0,\infty)$.

Now let $v_n(x)=u_n(x+x_n)$. Since this sequence satisfies again \eqref{hyp1}, it has a subsequence which converges weakly in $\dot H^s\cap L^p$ to some $v$. Since $\chi_1(|D|)$ is bounded on $\dot H^s\cap L^p$, $\chi_1(|D|)v_n$ converges weakly to $\chi_1(|D|)v$ in $\dot H^s\cap L^p$. On the other hand, $\chi_1(|D|)v_n$ converges weakly in $\dot H^\sigma\cap L^q$ to $A^{d/2-\sigma} w(A\cdot\,)\not\equiv 0$. Thus, $v\not\equiv 0$, as we intended to prove.
\end{proof}

In the proof above we used the following elementary lemma.

\begin{lem}\label{weakconv}
Let $X$ be a measure space and assume that $f_n\rightharpoonup f$ in $L^p(X)$ for some $1< p<\infty$. Then
$$
\int_X |f|^q \,dx \leq \liminf_{n\to\infty} \int_X |f_n|^q \,dx 
$$
for any $1< q<\infty$, in the sense that $\int_X |f_n|^q \,dx\to\infty$ if $f\not\in L^q(X)$.
\end{lem}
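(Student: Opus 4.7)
My plan is to reduce the weak convergence $f_n \rightharpoonup f$ in $L^p$ to pointwise almost everywhere convergence of suitable convex combinations via Mazur's theorem, and then invoke Fatou's lemma to obtain the lower semicontinuity of the $L^q$-norm. The key observation is that once a sequence of convex combinations converges both strongly in $L^p$ and pointwise a.e.\ to $f$, the corresponding $L^q$-norms are automatically controlled by Minkowski's inequality.

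First, set $L := \liminf_n \|f_n\|_{L^q}$ and dispose of $L = \infty$ separately: in that case the claimed inequality is vacuous, and moreover $\liminf = \infty$ is the same as $\lim = \infty$, which immediately gives the desired divergence. Assuming $L < \infty$, extract a subsequence, still denoted $f_n$, with $\|f_n\|_{L^q} \to L$; weak convergence is inherited along subsequences, so $f_n \rightharpoonup f$ persists in $L^p$.

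Next, for each $k$ I would apply Mazur's theorem to the tail $\{f_j : j \geq k\}$ (whose closed convex hull in $L^p$ still contains $f$, because $f_j \rightharpoonup f$ along the tail as well) to produce a finite convex combination
\[
g_k = \sum_{j \geq k} \lambda_{k,j} f_j, \qquad \lambda_{k,j} \geq 0, \quad \sum_j \lambda_{k,j} = 1,
\]
with $g_k \to f$ strongly in $L^p$. After passing to a further subsequence, $g_k \to f$ almost everywhere. Minkowski's inequality gives
\[
\|g_k\|_{L^q} \leq \sum_{j \geq k} \lambda_{k,j} \|f_j\|_{L^q} \leq \sup_{j \geq k} \|f_j\|_{L^q},
\]
and the right-hand side tends to $L$ as $k\to\infty$ since $\|f_j\|_{L^q}\to L$. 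Fatou's lemma then yields $\int_X |f|^q\,dx \leq \liminf_k \int_X |g_k|^q\,dx \leq L^q$, which is the claim; if $f \notin L^q$, the same chain of inequalities forces $L = \infty$, and hence $\int_X |f_n|^q\,dx \to \infty$ along the original sequence.

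The main obstacle is that the exponents $p$ and $q$ are unrelated, which rules out the one-line argument via lower semicontinuity of the $L^p$-norm under its own weak topology. The Mazur-plus-Fatou detour is exactly the exponent-agnostic workaround: Minkowski supplies the uniform $L^q$-control along convex combinations, while strong $L^p$-convergence supplies the a.e.\ convergence needed for Fatou, and the two ingredients combine cleanly regardless of the relation between $p$ and $q$.
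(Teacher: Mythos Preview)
Your proof is correct and takes a genuinely different route from the paper's. The paper argues as follows: assuming $M=\liminf_n\|f_n\|_q^q<\infty$, it passes to a subsequence that is bounded in $L^q$, invokes weak compactness of $L^q$ (reflexivity, $1<q<\infty$) to obtain a weak $L^q$-limit $g$, uses weak lower semicontinuity of the norm to get $\|g\|_q^q\leq M$, and then identifies $g$ with $f$ by testing against $\phi\in L^{p'}\cap L^{q'}$ (e.g.\ characteristic functions of finite-measure sets). Your argument instead stays in $L^p$: Mazur's theorem turns weak $L^p$-convergence into strong $L^p$-convergence of convex combinations, a further subsequence gives a.e.\ convergence, Minkowski controls the $L^q$-norms of the convex combinations by $\sup_{j\geq k}\|f_j\|_q$, and Fatou closes the argument.

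The paper's proof is shorter and exploits the weak topology of $L^q$ directly; the identification step $f=g$ is the only place the two exponents interact. Your approach trades that identification step for Mazur's theorem plus Fatou, and has the mild advantage of making the role of convexity (of $t\mapsto t^q$, via Minkowski) explicit---the same scheme would prove lower semicontinuity of any convex, lower semicontinuous integral functional under weak $L^p$-convergence, not just the $L^q$-norm. One small wording point: your opening remark that ``$\liminf=\infty$ gives the desired divergence'' is slightly misplaced, since the divergence clause in the statement is conditional on $f\notin L^q$; your final sentence handles that correctly via the contrapositive, so the logic is fine.
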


\begin{proof}
We may assume that $M=\liminf_n \|f_n\|_q^q<\infty$, for otherwise there is nothing to show. Under this assumption, by weak compactness a subsequence of $f_n$ converges weakly in $L^q$ to some $g$. This implies $\|g\|_q^q \leq M$. Since $L^{p'}\cap L^{q'}$ is dense in both $L^{p'}$ and $L^{q'}$, we have $f=g$, which completes the proof.
\end{proof}


\section{A non-local Br\'ezis--Lieb lemma}\label{sec:bl}

For the proof of Lemma \ref{BL} we need two lemmas.

\begin{lem}\label{BL1}
Let $0<\lambda<d$ and let $f_n$ and $f$ be functions on $\R^d$ satisfying \eqref{punt} and \eqref{nonlocal}. Then
\begin{align}
\iint_{\R^d\times\R^d} \frac{(\overline{f_n(x)}-\overline{f(x)})f(x) \overline{f(y)} (f_n(y)-f(y))}{|x-y|^\lambda}dxdy 
= o(1) \label{con1} \,.
\end{align}
\end{lem}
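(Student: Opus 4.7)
The plan is to recognize the integral in \eqref{con1} as the squared $\dot H^{-(d-\lambda)/2}$-norm of $\phi_n := \overline{g_n}\,f$, where $g_n := f_n - f$, and then to show $\phi_n \to 0$ in this norm by a truncation plus triangle-inequality argument. Indeed, via the Fourier representation of the Riesz kernel, the left-hand side of \eqref{con1} equals $c_\lambda \iint \phi_n(x)\overline{\phi_n(y)}|x-y|^{-\lambda}\,dxdy = c_\lambda\|\phi_n\|_{\dot H^{-(d-\lambda)/2}}^2\ge 0$, so the task reduces to proving $\|\phi_n\|_{\dot H^{-(d-\lambda)/2}} \to 0$.

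First I would truncate $f$, setting $f^\epsilon := f\,\chi_{B_R}\,\chi_{|f|\le M}$ with $R=R(\epsilon),M=M(\epsilon)\to\infty$ as $\epsilon\to 0$, and split $\phi_n = \overline{g_n}\,f^\epsilon + \overline{g_n}\,(f-f^\epsilon)$. The triangle inequality in $\dot H^{-(d-\lambda)/2}$ reduces matters to estimating each piece. For the remainder, the kernel $|x-y|^{-\lambda}$ is pointwise positive, hence the bilinear form $D(u,v) := \iint u(x)v(y)|x-y|^{-\lambda}\,dxdy$ is positive semi-definite on non-negative arguments and obeys Cauchy--Schwarz; combined with the pointwise AM--GM inequality
\[
|g_n(x)h(x)||g_n(y)h(y)| \le \tfrac12\bigl(|g_n(x)|^2|h(y)|^2+|h(x)|^2|g_n(y)|^2\bigr), \quad h:=f-f^\epsilon,
\]
one obtains
\[
\|\overline{g_n}(f-f^\epsilon)\|_{\dot H^{-(d-\lambda)/2}}^2 \le D(|g_n|^2,|g_n|^2)^{1/2}\,D(|f-f^\epsilon|^2,|f-f^\epsilon|^2)^{1/2}.
\]
The first factor is uniformly bounded in $n$ via $|g_n|^2 \le 2|f_n|^2 + 2|f|^2$, hypothesis \eqref{nonlocal}, and Fatou applied to $D(|f|^2,|f|^2)$. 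The second factor tends to $0$ as $\epsilon\to 0$ by dominated convergence in the double integral, since the integrand converges to $0$ a.e.\ and is dominated by the integrable function $16|f(x)|^2|f(y)|^2|x-y|^{-\lambda}$.

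For the main term, Hardy--Littlewood--Sobolev yields $\|\overline{g_n}f^\epsilon\|_{\dot H^{-(d-\lambda)/2}} \le C\|g_n f^\epsilon\|_{L^r}$ with $r := 2d/(2d-\lambda)\in(1,2)$, so since $f^\epsilon$ is bounded with support in $B_R$ it suffices to prove $\int_{B_R}|g_n|^r\,dx\to 0$. The main obstacle is that the hypotheses of Lemma \ref{BL1} do not include an $L^p$-bound on $f_n$, so local integrability of $g_n$ must be extracted from the purely nonlocal bound \eqref{nonlocal}. I would handle this via the elementary observation that $1 \le (2R)^\lambda|x-y|^{-\lambda}$ on $B_R\times B_R$, which gives
\[
|B_R|\int_{B_R}|g_n|^2\,dx \le (2R)^\lambda D(|g_n|^2,\chi_{B_R}) \le (2R)^\lambda D(|g_n|^2,|g_n|^2)^{1/2}D(\chi_{B_R},\chi_{B_R})^{1/2}
\]
by Cauchy--Schwarz for $D$. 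Since $D(\chi_{B_R},\chi_{B_R})<\infty$ (as $\lambda<d$), one obtains the uniform local bound $\int_{B_R}|g_n|^2\,dx \le C(R)$. For $r<2$, Hölder then shows that $|g_n|^r$ is uniformly integrable on $B_R$; combined with $g_n\to 0$ a.e.\ and Vitali's convergence theorem, this gives $\int_{B_R}|g_n|^r\,dx\to 0$, which completes the proof.
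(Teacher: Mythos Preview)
Your proof is correct and takes a genuinely different route from the paper's. The paper never analyzes $\phi_n=\overline{(f_n-f)}\,f$ directly; instead it applies the Br\'ezis--Lieb lemma twice---once to $g_n(x,y)=\overline{f_n(x)}\,|x-y|^{-\lambda/2}f_n(y)$ in $L^2(\R^d\times\R^d)$, and once to $f_n(y)$ in $L^2$ with respect to the weighted measure $\bigl(\int |x-y|^{-\lambda}|f(x)|^2\,dx\bigr)\,dy$---to obtain two asymptotic identities which, when combined algebraically, yield \eqref{con1}. Your argument bypasses Br\'ezis--Lieb entirely: you show $\|\phi_n\|_{\dot H^{-(d-\lambda)/2}}\to 0$ by truncating $f$, controlling the tail via the positive-definiteness of the Riesz form and Cauchy--Schwarz, and handling the truncated main term through Hardy--Littlewood--Sobolev and Vitali. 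The paper's proof is shorter and more conceptual, reducing everything to two black-box invocations of the classical lemma; yours is more hands-on but entirely self-contained, and it isolates a useful auxiliary fact---that the nonlocal energy bound \eqref{nonlocal} forces uniform local $L^2$ integrability of $f_n$---that the paper's argument does not make explicit. One cosmetic point: since $f^\epsilon$ is a pointwise truncation of $f$, you have $|f-f^\epsilon|\le|f|$, so the dominating constant $16$ in the DCT step can be taken to be $1$.
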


\begin{proof}
By \eqref{punt} the functions $g_n(x,y) = \overline{f_n(x)} |x-y|^{-\lambda/2} f_n(y)$ converge almost everywhere in $\R^d\times\R^d$ to the function $g(x,y) = f(x) |x-y|^{-\lambda/2} f(y)$. Thus, by the Br\'ezis--Lieb lemma \cite{BL},
$$
\iint_{\R^d\times\R^d} |g_n(x,y)|^2 dxdy = \iint_{\R^d\times\R^d} |g(x,y)|^2 dxdy + \iint_{\R^d\times\R^d} |g_n(x,y)-g(x,y)|^2 dxdy + o(1) \,. 
$$
This is the same as
\begin{equation}
\label{eq:bl1}
\iint_{\R^d\times\R^d} \frac{\overline{f_n(x)} f(x) \overline{f(y)} f_n(y)}{|x-y|^\lambda}dxdy 
= \iint_{\R^d\times\R^d} \frac{|f(x)|^2 |f(y)|^2}{|x-y|^\lambda}dxdy + o(1) \,.
\end{equation}
On the other hand, applying the Br\'ezis--Lieb lemma to the functions $f_n(y)$ with respect to the measure $\int_{\R^d} |x-y|^{-\lambda} |f(x)|^2 dx dy$ we infer that
\begin{equation}
\label{eq:bl2}
\iint_{\R^d\times\R^d} \frac{|f(x)|^2 \left(\overline{f(y)} f_n(y)+ f(y)\overline{f_n(y)}\right)}{|x-y|^\lambda}dxdy 
= 2 \iint_{\R^d\times\R^d} \frac{|f(x)|^2 |f(y)|^2}{|x-y|^\lambda}dxdy + o(1) \,.
\end{equation}
The assertion of the lemma now follows from \eqref{eq:bl1} and \eqref{eq:bl2}.
\end{proof}

\begin{lem}\label{BL2}
Let $0<\lambda<d$, $2<p<\infty$ and let $f_n$ and $f$ be functions on $\R^d$ satisfying \eqref{punt}, \eqref{nonlocal} and \eqref{sobolev}. Then
\begin{align}
\iint_{\R^d\times\R^d} \frac{|f_n(x)|^2|f(y)|^2}{|x-y|^\lambda}dxdy 
= \iint_{\R^d\times\R^d} \frac{|f(x)|^2|f(y)|^2}{|x-y|^\lambda}dxdy + o(1) \label{con2} \,.
\end{align}
\end{lem}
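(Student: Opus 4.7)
The idea is to reduce \eqref{con2}, via a Br\'ezis--Lieb decomposition on the product space $\R^d\times\R^d$, to a single cross-term and then to control that cross-term by reinterpreting the Riesz integral as an inner product in a homogeneous Sobolev space.

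Set $g_n(x,y):=f_n(x)\overline{f(y)}|x-y|^{-\lambda/2}$ and $g(x,y):=f(x)\overline{f(y)}|x-y|^{-\lambda/2}$, so that by \eqref{punt} $g_n\to g$ a.e.\ on $\R^d\times\R^d$. The Riesz kernel is positive definite (since $\widehat{|x|^{-\lambda}}=c_{d,\lambda}|\xi|^{-(d-\lambda)}$), so Cauchy--Schwarz for the bilinear form $\mathcal{E}(a,b):=\iint a(x)b(y)|x-y|^{-\lambda}\,dx\,dy$, together with \eqref{nonlocal} and Fatou, gives $\sup_n\|g_n\|_{L^2(\R^{2d})}^2=\sup_n\mathcal{E}(|f_n|^2,|f|^2)<\infty$. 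Applying the classical Br\'ezis--Lieb lemma on $\R^d\times\R^d$ to $g_n$ then yields
\[
\mathcal{E}(|f_n|^2,|f|^2)=\mathcal{E}(|f|^2,|f|^2)+\mathcal{E}(|f_n-f|^2,|f|^2)+o(1),
\]
so it suffices to prove that $I_n:=\mathcal{E}(|f_n-f|^2,|f|^2)\to 0$.

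For this I interpret $\mathcal{E}$ via Plancherel: on Schwartz functions one has $\mathcal{E}(a,b)=c_{d,\lambda}\langle a,b\rangle_{\dot H^{-(d-\lambda)/2}}$, and the identity extends by density to the Hilbert space $\dot H^{-(d-\lambda)/2}(\R^d)$. Hypothesis \eqref{nonlocal} then says exactly that $|f_n|^2$ is uniformly bounded in $\dot H^{-(d-\lambda)/2}$, and the pointwise bound $|f_n-f|^2\leq 2(|f_n|^2+|f|^2)$ combined with the Cauchy--Schwarz estimate above implies that $|f_n-f|^2$ is uniformly bounded in the same space. Since $|f|^2\in\dot H^{-(d-\lambda)/2}$ is a fixed element (by Fatou applied to the Riesz energy), it is enough to establish $|f_n-f|^2\rightharpoonup 0$ weakly in $\dot H^{-(d-\lambda)/2}$. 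To identify this weak limit I use weak compactness in the reflexive Hilbert space $\dot H^{-(d-\lambda)/2}$ together with the density of $C_c^\infty(\R^d)$ in $\dot H^{(d-\lambda)/2}(\R^d)$ (valid because $0<(d-\lambda)/2<d/2$): for any $\phi\in C_c^\infty$ the Sobolev pairing $\langle|f_n-f|^2,\phi\rangle$ reduces to the ordinary integral $\int\phi|f_n-f|^2\,dx$, which tends to $0$ by \eqref{punt} together with the equi-integrability on bounded sets coming from \eqref{sobolev} (this is where the strict inequality $p>2$ is used) and Vitali's convergence theorem. Every subsequential weak limit is therefore $0$, so the full sequence converges weakly to $0$ and the proof is complete.

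The step requiring the most care is the rigorous identification of the non-local integral with the Sobolev pairing when the functions involved lie in $L^{p/2}$ but not necessarily in $L^2$. I would handle this by mollification and truncation, applying Plancherel for Schwartz approximants and passing to the limit with the uniform $L^{p/2}$ bound and the Riesz-energy bound.
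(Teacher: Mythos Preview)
Your proof is correct. The core idea---interpreting the Riesz integral as the inner product in $\dot H^{-(d-\lambda)/2}$ and exploiting weak convergence there---is the same as the paper's, but your route to the weak limit is organized differently. The paper works directly with $|f_n|^2$: it uses a measure-theoretic result (Bogachev) to get $|f_n|^2\rightharpoonup|f|^2$ in $L^{p/2}$, extracts a weak $\dot H^{-(d-\lambda)/2}$ limit $\sigma$, and identifies $\sigma=|f|^2$ by testing against the dense set $L^{(p/2)'}\cap\dot H^{(d-\lambda)/2}$; then it pairs against $|f|^2$ itself. You instead first apply the Br\'ezis--Lieb lemma on the product space (the same device the paper uses in Lemma~\ref{BL1}) to isolate the cross-term $\mathcal{E}(|f_n-f|^2,|f|^2)$, and then only need the simpler statement $|f_n-f|^2\rightharpoonup 0$, which you get by testing against $C_c^\infty$ and invoking Vitali on compact sets. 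Your argument is slightly more elementary in that it avoids the citation to Bogachev and replaces it with a direct equi-integrability estimate; the paper's argument is a little more streamlined in that it avoids the preliminary Br\'ezis--Lieb decomposition. The technical point you flag (identifying the Sobolev duality pairing with the ordinary integral for functions only known to lie in $L^{p/2}$) is real but routine, and the paper faces exactly the same issue when it writes the weak $\dot H^{-(d-\lambda)/2}$ convergence as an $L^2$ pairing.
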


\begin{proof}
Choose a subsequence $(f_{n_k})$ such that
\begin{equation}
\label{eq:BL2proof}
\lim_{k\to\infty} \iint_{\R^d\times\R^d} \frac{|f_{n_k}(x)|^2|f(y)|^2}{|x-y|^\lambda}dxdy
= \limsup_{n\to\infty} \iint_{\R^d\times\R^d} \frac{|f_n(x)|^2|f(y)|^2}{|x-y|^\lambda}dxdy \,.
\end{equation}
By a standard result in measure theory (see, e.g., \cite[Prop. 4.7.12]{Bo}), we infer from \eqref{punt} and \eqref{sobolev} that $|f_n|^2 \rightharpoonup |f|^2$ weakly in $L^{p/2}(\R^d)$.

Next, we note that the double integral in \eqref{nonlocal} coincides, up to a positive multiplicative constant, with the square of the $\dot H^{-(d-\lambda)/2}$-norm of $|f_n|^2$. Thus, by weak compactness we may assume (after passing to another subsequence, if necessary) that $|f_{n_k}|^2$ converges weakly in $\dot H^{-(d-\lambda)/2}$ to some $\sigma\in \dot H^{-(d-\lambda)/2}$, that is,
\begin{equation}
\label{eq:weakd}
\left( D^{-(d-\lambda)/2} \left( |f_{n_k}|^2 - \sigma \right), |D|^{-(d-\lambda)/2} \rho \right)_{L^2} \to 0
\end{equation}
for every $\rho\in\dot H^{-(d-\lambda)/2}$, where $D=\sqrt{-\Delta}$. Since $D^{-d+\lambda}$ is a bijection between $\dot H^{-(d-\lambda)/2}$ and $\dot H^{(d-\lambda)/2}$, we infer that
$$
\left( |f_{n_k}|^2 - \sigma, \psi \right)_{L^2} \to 0
$$
for every $\psi\in \dot H^{(d-\lambda)/2}$. Since $L^{p/2}\cap \dot H^{(d-\lambda)/2}$ is dense in $\dot H^{(d-\lambda)/2}$, we conclude that $\sigma=|f|^2$. Taking $\rho=|f|^2$ in \eqref{eq:weakd} we infer that
$$
\lim_{k\to\infty} \iint_{\R^d\times\R^d} \frac{|f_{n_k}(x)|^2|f(y)|^2}{|x-y|^\lambda}dxdy = \iint_{\R^d\times\R^d} \frac{|f(x)|^2|f(y)|^2}{|x-y|^\lambda}dxdy \,.
$$
This proves $\leq$ in \eqref{con2}. The proof of $\geq$ is similar, with $\limsup$ in \eqref{eq:BL2proof} replaced by $\liminf$.
\end{proof}

With Lemmas \ref{BL1} and \ref{BL2} at hand we are in position to give the

\begin{proof}[Proof of Lemma \ref{BL}]
We compute
\begin{align*}
\iint_{\R^d\times\R^d} \frac{|f_n(x)|^2|f_n(y)|^2}{|x-y|^\lambda}dxdy 
= \mathcal A & + \iint_{\R^d\times\R^d} \frac{|f_n(x)-f(x)|^2 |f_n(y)-f(y)|^2}{|x-y|^\lambda} dxdy \\
& - 4 \mathcal R_1 + 4 \mathcal R_2 -4 \mathcal R_3 \,,
\end{align*}
where, with $r_n(x)=\re \left(\overline{f(x)}\left(f_n(x)-f(x)\right)\right)$,
\begin{align*}
\mathcal A & = 2 \iint_{\R^d\times\R^d} \frac{|f(x)|^2|f_n(y)|^2}{|x-y|^\lambda}dxdy - \iint_{\R^d\times\R^d} \frac{|f(x)|^2|f(y)|^2}{|x-y|^\lambda}dxdy \,, \\
\mathcal R_1 & = \iint_{\R^d\times\R^d} \frac{r_n(x) r_n(y)}{|x-y|^\lambda}dxdy \,, \\
\mathcal R_2 & = \iint_{\R^d\times\R^d} \frac{r_n(x) |f_n(y)|^2}{|x-y|^\lambda}dxdy  \,, \\
\mathcal R_3 & = \iint_{\R^d\times\R^d} \frac{r_n(x) |f(y)|^2}{|x-y|^\lambda}dxdy
\end{align*}
We claim that Lemma \ref{BL1} implies that
\begin{equation}
\label{eq:r1}
\mathcal R_1 = o(1) \,.
\end{equation}
To see this, let $s_n(x)=\overline{f(x)}\left(f_n(x)-f(x)\right)$. Then$r_n=(s_n+\overline{s_n})/2$ and
$$
\mathcal R_1 = \frac12 \re \iint_{\R^d\times\R^d} \frac{\overline{s_n(x)} s_n(y)}{|x-y|^\lambda}dxdy + \frac12 \re \iint_{\R^d\times\R^d} \frac{s_n(x) s_n(y)}{|x-y|^\lambda}dxdy \,.
$$
Lemma \ref{BL1} says that
$$
\iint_{\R^d\times\R^d} \frac{\overline{s_n(x)} s_n(y)}{|x-y|^\lambda}dxdy = o(1) \,.
$$
Thus, to prove \eqref{eq:r1} is suffices to prove that the double integral involving the product $s_n(x) s_n(y)$ tends to zero. Since the kernel $|x-y|^{-\lambda}$ is non-negative definite (i.e., has a non-negative Fourier transform), we have the Cauchy--Schwarz inequality
$$
\left| \iint_{\R^d\times\R^d} \frac{\overline{g(x)}h(y)}{|x-y|^\lambda} dxdy \right|
\leq \left( \iint_{\R^d\times\R^d} \frac{\overline{g(x)}g(y)}{|x-y|^\lambda} dxdy \right)^{1/2}
\left(\iint_{\R^d\times\R^d} \frac{\overline{h(x)}h(y)}{|x-y|^\lambda} dxdy
\right)^{1/2}
$$
for any $g$ and $h$ for which the right side is finite. Applying this to $g=\overline{s_n}$ and $h=s_n$ and using again Lemma \ref{BL1}, we obtain \eqref{eq:r1}.

A similar application of the Cauchy--Schwarz inequality implies
$$
|\mathcal R_2| \leq \left( \iint_{\R^d\times\R^d} \frac{|f_n(x)|^2 |f_n(y)|^2}{|x-y|^\lambda}dxdy \right)^{1/2} \mathcal R_1^{1/2}
$$
and
$$
|\mathcal R_3| \leq \left( \iint_{\R^d\times\R^d} \frac{|f(x)|^2 |f(y)|^2}{|x-y|^\lambda}dxdy \right)^{1/2} \mathcal R_1^{1/2} \,.
$$
Therefore, by \eqref{nonlocal} and again by Lemma \ref{BL1}, $\mathcal R_2 + \mathcal R_3 = o(1)$.

Finally, we use assumption \eqref{sobolev} to apply Lemma \ref{BL2} and to conclude that
$$
\mathcal A = \iint_{\R^d\times\R^d} \frac{|f(x)|^2|f(y)|^2}{|x-y|^\lambda}dxdy + o(1) \,.
$$
This completes the proof of Lemma \ref{BL}.
\end{proof}


\appendix

\section{Proof of Proposition \ref{CHLSgen}}\label{section2}

In this appendix we extend the method from \cite{BOV} to derive Proposition \ref{CHLSgen}. Let $d\geq 1$, $s>0$ and $0<\lambda<d$ be fixed. First note that if $d>2s$ and $p=d/(d-2s)$, then the claimed inequality is the Sobolev inequality.

Therefore, in the following we assume that $p\neq d/(d-2s)$. Note that this implies, in particular, that $\lambda\neq 4s$. A tedious but straightforward computation shows that the restriction on $p$ in the proposition (in the case $p\neq d/(d=2s)$) is equivalent to the fact that 
\begin{equation}
\label{eq:theta}
\theta = \frac{2d-2pd+p\lambda}{d-2ps-pd+p\lambda} \,.
\end{equation}
satisfies the bounds
\begin{equation}
\label{eq:thetabounds}
\frac{d-\lambda}{d+2s -\lambda}\leq \theta < 1 \,.
\end{equation}
Note that $\theta$ is always well-defined in $\R\cup\{\pm\infty\}$, since numerator and denominator do not vanish simultaneously (since $\lambda\neq 4s$).

We now recall a version of the fractional Gagliardo--Nirenberg inequality.

\begin{lem}\label{fracGN}
Let $p\in(1,\infty)$ and define $\theta$ by \eqref{eq:theta}. If $\theta$ satisfies \eqref{eq:thetabounds}, then
\begin{equation}\label{intermGN}
\|D^{\frac{d-\lambda}{2}} \psi\|_{L^p}\leq C \|\psi\|_{L^2}^{1-\theta} 
\|D^{s+\frac{d-\lambda}{2}} \psi\|_{L^\frac{2p}{p+1}}^\theta \,.
\end{equation}
If $d<2s$, then \eqref{intermGN} remains valid for $p=\infty$. 
\end{lem}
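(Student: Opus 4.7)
The plan is to perform a Littlewood--Paley decomposition and use Bernstein's inequality on each dyadic block, balancing a low-frequency $L^2$ bound against a high-frequency $L^q$ bound via an optimized frequency cutoff. Set $a=(d-\lambda)/2$ and $q=2p/(p+1)$, and write $\psi=\sum_{j\in\Z}\Delta_j\psi$ for the usual dyadic decomposition. For each block, Bernstein's inequality gives two competing estimates,
\begin{equation*}
\|D^a\Delta_j\psi\|_{L^p}\lesssim 2^{jA}\,\|\Delta_j\psi\|_{L^2}
\quad\text{and}\quad
\|D^a\Delta_j\psi\|_{L^p}\lesssim 2^{jB}\,\|D^{a+s}\Delta_j\psi\|_{L^q},
\end{equation*}
where $A=a+d/2-d/p$ and $B=d(p-1)/(2p)-s$.

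A direct calculation shows $A=0$ exactly when $\theta=0$ (namely $p=2d/(2d-\lambda)$) and $B=0$ exactly at the Sobolev endpoint $p=d/(d-2s)$, where $\theta=1$. Hence the hypothesis \eqref{eq:thetabounds} is equivalent to $A$ and $B$ having strictly opposite signs, placing us in one of the two regimes $\{A>0,B<0\}$ (when $\lambda<4s$) or $\{A<0,B>0\}$ (when $\lambda>4s$). I would split the decomposition at a dyadic scale $2^j\sim N$ and apply, on each side, the Bernstein estimate whose exponent has the favorable sign. The triangle inequality, together with the trivial bound $\|\Delta_j\psi\|_{L^2}\leq\|\psi\|_{L^2}$, the $L^q$-boundedness of $\Delta_j$ for $1<q<\infty$, and geometric-series summation, then yields
\begin{equation*}
\|D^a\psi\|_{L^p}\lesssim N^A\,\|\psi\|_{L^2}+N^B\,\|D^{a+s}\psi\|_{L^q}.
\end{equation*}

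Optimization in $N>0$ is meaningful precisely because $A$ and $B$ have opposite signs, and it produces
\begin{equation*}
\|D^a\psi\|_{L^p}\lesssim\|\psi\|_{L^2}^{-B/(A-B)}\,\|D^{a+s}\psi\|_{L^q}^{A/(A-B)},
\end{equation*}
and a routine algebraic check using \eqref{eq:theta} confirms $A/(A-B)=\theta$. For the endpoint $p=\infty$ (under $d<2s$), take $q=2$: the low-frequency Bernstein bound reads $\|D^a P_{<N}\psi\|_{L^\infty}\lesssim N^A\|\psi\|_{L^2}$ by Cauchy--Schwarz on the Fourier side, while the high-frequency geometric sum converges since $B=d/2-s<0$, and the same optimization concludes.

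The main technical subtlety I anticipate is the sign bookkeeping: correctly identifying which of the two regimes we are in depending on $\lambda$ vs.\ $4s$, and confirming by direct computation that the optimized exponent matches the $\theta$ prescribed by \eqref{eq:theta}. The analytic ingredient---Bernstein together with a dyadic frequency split and an optimized cutoff---is standard and presents no real difficulty.
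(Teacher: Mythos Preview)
Your approach differs from the paper's. For $p<\infty$ the paper concatenates two black-box inequalities from \cite{GiVe}: first the convexity bound
\[
\|\psi\|_{L^2}^{1-\theta}\,\|D^{s+a}\psi\|_{L^q}^{\theta}\ \geq\ c\,\|D^{\theta(s+a)}\psi\|_{L^{2p/(p+\theta)}},
\]
valid for any $\theta\in[0,1]$, and then a Sobolev-type step $\|D^{\theta(s+a)}\psi\|_{L^{2p/(p+\theta)}}\geq c'\|D^a\psi\|_{L^p}$, which is exactly where the lower bound $\theta\geq (d-\lambda)/(d+2s-\lambda)$ enters (it is equivalent to $\theta(s+a)\geq a$). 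For $p=\infty$ the paper does precisely your Fourier-side split at a scale $R$ with Cauchy--Schwarz and optimization, so there the two arguments coincide.

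There is, however, a genuine gap in your argument when $1<p<2$. Your first dyadic estimate $\|D^a\Delta_j\psi\|_{L^p}\lesssim 2^{jA}\|\Delta_j\psi\|_{L^2}$ is Bernstein from $L^2$ into $L^p$, which requires $p\geq 2$; for $p<2$ it is simply false (take $\Delta_j\psi$ to be a sum of many widely separated bumps at scale $2^{-j}$, so that $\|\Delta_j\psi\|_{L^p}\sim M^{1/p}\gg M^{1/2}\sim\|\Delta_j\psi\|_{L^2}$). And $p<2$ does occur in the admissible range: in case~(5) of Proposition~\ref{CHLSgen} one has $d>\lambda>4s$, hence $d/(d-2s)<2$ and $(d-\lambda+4s)/(d-\lambda+2s)<2$, so the entire interval lies below $2$; the same happens in case~(3) whenever $d>4s$. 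Thus a substantial part of the parameter range is not covered by your argument as written, and the repair is not as immediate as reversing an inequality: one needs either a square-function argument with more careful interpolation or a different route altogether.

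A minor side remark: your assertion that \eqref{eq:thetabounds} is \emph{equivalent} to $A$ and $B$ having strictly opposite signs is an overstatement, since opposite signs correspond to $\theta\in(0,1)$, a strictly larger interval. But you only use the forward implication, so this does no harm.
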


For $p<\infty$ this is explicitly stated in \cite{NP}, but we use the opportunity to show how it can be deduced from the older results of \cite{GiVe}. As similar argument works for inequality \eqref{GagliNiren}. An advantage of our reduction to \cite{GiVe} is that it clearly shows why $p=(d+4s-\lambda)/(d+2s-\lambda)$ is an endpoint.

\begin{proof}
The case $p<\infty$ can be easily deduced from \cite{GiVe}. More precisely, for \emph{any} $1<p<\infty$ and \emph{any} $0\leq\theta\leq 1$, we have the convexity inequality
$$
\|\psi\|_{L^2}^{1-\theta} 
\|D^{s+\frac{d-\lambda}{2}} \psi\|_{L^\frac{2p}{p+1}}^\theta
\geq c \|D^{\theta(1+\frac{d-\lambda}{2})}\psi\|_{L^{\frac{2p}{p+\theta}}}
$$
with some $c>0$ \cite[Lemma A.1]{GiVe}. If $p=(d+4s-\lambda)/(d+2s-\lambda)$, then $\theta$ defined in \eqref{eq:theta} equals $(d-\lambda)/(d+2s-\lambda)$ and the lemma follows. Otherwise, for the choice \eqref{eq:theta} satisfying \eqref{eq:thetabounds} we can apply \cite[Lemma A.2]{GiVe} to deduce that
$$
\|D^{\theta(1+\frac{d-\lambda}{2})}\psi\|_{L^{\frac{2p}{p+\theta}}}
\geq c' \|D^{\frac{d-\lambda}{2}} \psi\|_{L^p} \,,
$$
which completes the proof of the lemma for $p<\infty$.

If $p=\infty$ and $d<2s$, then by the Cauchy--Schwarz inequality
\begin{align*}
\left|D^{\frac{d-\lambda}{2}}\psi(x)\right| & = (2\pi)^{-d/2}\left| \int_{\R^d} |\xi|^{\frac{d-\lambda}2} e^{i\xi\cdot x} \hat\psi(\xi) \,dx \right| \\
& \leq (2\pi)^{-d/2} \int_{|\xi|<R} |\xi|^{\frac{d-\lambda}2} |\hat\psi(\xi)| \,d\xi + (2\pi)^{-d/2} \int_{|\xi|\geq R} |\xi|^{\frac{d-\lambda}2} |\hat\psi(\xi)| \,d\xi \\
& \leq C_1 R^{\frac{2d-\lambda}2} \left( \int_{|\xi|<R} |\hat\psi(\xi)|^2 \,d\xi \right)^{1/2}
+C_2 R^{-\frac{2s-d}2} \left( \int_{|\xi|<R} |\xi|^{2s+d-\lambda} |\hat\psi(\xi)|^2 \,d\xi \right)^{1/2} \\
& \leq C_1 R^{\frac{2d-\lambda}2} \|\psi\|_2 + C_2 R^{-\frac{2s-d}2} \|D^{s+\frac{d-\lambda}{2}} \psi\|_{L^2} \,.
\end{align*}
Optimizing in $R$ yields the claimed inequality.
\end{proof}

We combine Lemma \ref{fracGN} with the identity
$$
\iint_{\R^d\times \R^d}
\frac{|\varphi(x)|^2 |\varphi(y)|^2}{|x-y|^{\lambda}}dxdy=
C_{d,\lambda} \left\| D^{-\frac{d-\lambda}{2}} |\varphi|^2 \right\|_{L^2}^2
$$
for some positive constant $C_{d,\lambda}$, and obtain
\begin{align*}
\left\|\varphi\right\|_{L^{2p}}^2 = \left\| |\varphi|^2 \right\|_{L^p} & \leq C \left\| D^{-\frac{d-\lambda}{2}}|\varphi|^2 \right\|_{L^2}^{1-\theta} 
\left\| D^{s} |\varphi|^2 \right\|_{L^\frac{2p}{p+1}}^\theta \\
& = C C_{d,\lambda}^{-\frac{1-\theta}2} \left( \iint_{\R^d\times \R^d}
\frac{|\varphi(x)|^2 |\varphi(y)|^2}{|x-y|^{\lambda}}dxdy \right)^{\frac{1-\theta}2}
\left\| D^{s} |\varphi|^2 \right\|_{L^\frac{2p}{p+1}}^\theta \,.
\end{align*}
By the fractional chain-rule (see, e.g., \cite{CW}), this implies
$$
\|\varphi\|_{L^{2p}}^2\leq C' \left(\iint_{\R^d\times \R^d}
\frac{|\varphi(x)|^2 |\varphi(y)|^2}{|x-y|^{\lambda}}dxdy \right)^{\frac{1-\theta}2}
\left\|D^s \varphi\right\|_{L^2}^\theta \|\varphi\|_{L^{2p}}^{\theta} \,.
$$
(Strictly speaking, \cite{CW} contains only the case $0<s<1$ and $p<\infty$. A proof with $(1+D^2)^{1/2}$ instead of $D$ is contained in \cite[Prop. 1.1]{Ta}; 
see also \cite{GO} and references therein for the general case.) To summarize, we have shown that
$$
\|\varphi\|_{L^{2p}} \leq \left(C'\right)^{\frac{1}{2-\theta}} \left( \iint_{\R^d\times \R^d}
\frac{|\varphi(x)|^2 |\varphi(y)|^2}{|x-y|^{\lambda}}dxdy\right)^{\frac{1-\theta}{4-2\theta}}
\|\varphi\|_{\dot H^s}^\frac{\theta}{2-\theta} \,,
$$
which is the stated inequality.\qed

\subsection*{Acknowledgement.} FIRB2012 `Dinamiche dispersive: analisi di
Fourier e metodi variazionali' (J.B., N.V.) and PRIN2009 `Metodi Variazionali e Topologici nello Studio di Fenomeni non Lineari' (J.B.) and U.S. National Science Foundation grant PHY-1068285 (R.F.) are acknowledged. The authors would like to thank E. Lieb and G. Ponce for useful discussions.


\end{document}